\theoremstyle{plain}
\newtheorem{theorem}{Théorème}
\newtheorem{corollary}{Corollaire}
\newtheorem{proposition}{Proposition}
\newtheorem{notation}{Notation}
\theoremstyle{definition}
\newtheorem{definition}{Définition}
\newtheorem{hypothesis}{Hypothèse}
\theoremstyle{remark}
\newtheorem{remark}{Remarque}
\date{}
\title{ Représentations de réflexion de groupes de Coxeter\\Première partie: le cas irréductible}
\author{François ZARA
}
\begin{document}
\maketitle
\begin{abstract}
Dans ce travail on étudie des représentations de certains groupes de Coxeter pour en déduire des propriétés des groupes de réflexions correspondants.

\end{abstract}
\begin{otherlanguage}{english}
\begin{abstract}
In this work we study representations of certain Coxeter groups to obtain some properties of the corresponding reflection groups.
\end{abstract}
\end{otherlanguage}
\footnote{Mathematics Subject Classification.20F55,22E40,51F15.}
\footnote{Mots clés et phrases: groupes de Coxeter, groupes de réflexion, construction  de représentations de réflexion de groupes de Coxeter.}
\section{Introduction}
Dans ce travail on étudie certaines représentations de groupes de Coxeter de rang fini, irréductibles et $2$-sphériques: tous les $m_{st} $ (ordre du produit des réflexions $s$ et $t$) sont finis (conditions H(Cox)). Dans cette généralité, le problème est hors de portée, aussi nous nous restreignons à certaines classes de représentations. Soient $K$ un corps de caractéristique $0$, (W,S) un système de Coxeter qui satisfait aux conditions précédentes et $M$ un $K$-espace vectoriel. Soit $R$: $W \to GL(M)$ une représentation de $W$ qui satisfait aux conditions H(R) suivantes:
\begin{enumerate}
  \item  $\forall s \in S$, $R(s)$ est une réflexion de $M$;
  \item Si $s\in S$ et si $a_{s}$ est un vecteur directeur de $R(s)$, alors $\emph{A}:=\{a_{s}|s\in S\}$ est une base de $M$;
  \item  $\forall (s,t)\in S\times S$, $R(s)R(t)$ a le même ordre que $st$.
\end{enumerate}
dans ces conditions, on dit que $R$ est une \textbf{ représentation de réflexion } de $W$. On remarque que $R(W)$ est un sous-groupe de $GL(M)$ engendré par des réflexions.\\
Le premier but de ce travail est de construire \textbf{toutes} (à équivalence près) les représentations de réflexion de $W$. Pour cela on utilise les outils et résultats suivants:

	1) Soient $r$ et $s$ deux réflexions de $M$, de vecteurs directeurs $a$ et $b$ respectivement. Il existe $\lambda$ et $\mu$ dans $K$ tels que:\\
	$r(a)=-a, \quad r(b)=b+\lambda a$ \qquad et \qquad $s(a)=a+\mu b, \quad s(b)=-b$.\\ Alors $rs$ est d'ordre fini $n\geqslant 3$ si et seulement si il existe un entier $k$ premier à $n$ tel que $\lambda\mu=4\cos ^{2}(\frac{k\pi}{n})$ et $rs$ est d'ordre $2$ si et seulement si $\lambda = \mu =0$. L'élément $4\cos ^{2}(\frac{k\pi}{n})$ est racine d'un polynôme unitaire à coefficients entiers $u_{n}(X)$ et cette famille de polynômes est une famille de polynômes orthogonaux. On a un facteur irréductible de $u_{n}(X)$: $v_{n}(X)$ dont les racines sont celles pour lesquelles $(k,n)=1$. Ces polynômes ont été définis d'abord dans \cite{Z}.

2) A chaque groupe de Coxeter $W$, on peut associer un graphe $\Gamma(W)$ dont les sommets sont les éléments de $S$ et $(s,t)$ est une arête  de $\Gamma(W)$ si l'ordre $m_{st}$ de $st$ est $\geqslant 3$. On décore chaque arête du symbole $m_{st}$ . Si $W$ est irréductible, $\Gamma(W)$ est connexe. On appelle aussi $\Gamma(W)$ un diagramme.\\
Pour construire la représentation $R$ satisfaisant aux conditions H(R), on choisit un arbre couvrant $\textit{T}$ et $s_{0}$ un sommet (une racine) de $\Gamma(W)$ . On appelle $E(\Gamma(W))$ ou $E(\Gamma)$, l'ensemble des arêtes de $\Gamma(W)$ et $E(\textit{T})$ l'ensemble des arêtes de $\textit{T}$, enfin on pose $E'(\textit{T}) :=E(\Gamma(W))-E(\textit{T})$.\\
On définit sur l'ensemble des sommets de $\Gamma(W)$ une relation d'ordre, notée $\preccurlyeq$ (qui dépend de $\textit{T}$ et de $s_{0}$) de la manière suivante: $s\preccurlyeq t$ si $s$ et $t$ sont sur la même branche de l'arbre $\textit{T}$ et si la distance de $s$ à $s_{0}$ (dans $\textit{T}$) est $\leqslant $ à la distance de $t$ à $s_{0}$ (dans $\textit{T}$).\\
On construit maintenant la représentation $R$. Pour cela on fait un certain nombre de choix.
\begin{enumerate}
  \item $\forall e \in E(\Gamma(W))$, $\alpha_{e}$ une racine de $v_{m_{e}}(X)$;
  \item soit $e:=(s,t)\in E'(\textit{T})$. On pose $e':=(t,s)$ (notation seulement) et on choisit $l_{e}$ et $l_{e'}$ dans $K$ de telle sorte que $l_{e}l_{e'}=\alpha_{m_{e}}$.

\end{enumerate}
On appelle $K_{0}$ un corps de décomposition de l'ensemble des polynômes $u_{m_{e}}(X)$, $e$ parcourant l'ensemble des arêtes de $E(\Gamma(W))$. On peut choisir $K_{0}$ comme sous-corps de $\mathbb{R}$, et alors $K_{0}$ est un sous-corps réel d'un corps cyclotomique. On choisit  alors $K$ comme engendré par $K_{0}$ et tous les $l_{e}$, $e \in E'(\textit{T})$.

On définit des éléments $\zeta_{s}$ ($s\in S$) de $GL(M)$ de la manière suivante:
\begin{itemize}
  \item $\forall s \in S, \zeta_{s}(a_{s})=-a_{s}$;
  \item soit $(s,t)\in S^{^2}$ tel que $st=ts\neq 1$, on pose $\zeta_{t}(a_{s})=a_{s}$;
  \item soit $e:=(s,t)\in E(\textit{T})$ avec $s\preccurlyeq t$, on pose $\zeta_{s}(a_{t})=\alpha_{e}a_{s}+a_{t}$ et $\zeta_{t}(a_{s})=a_{s}+a_{t}$
  \item Soit $e:=(s,t)\in E'(\textit{T})$, on pose $\zeta_{s}(a_{t})=l_{e}a_{s} +a_{t}$ et $\zeta_{t}(a_{s})=l_{e'}a_{t} +a_{s}$
\end{itemize}
\begin{theorem}
L'application $R :s \mapsto \zeta_{s}: S \to \{\zeta_{s} | s \in S\}$ se prolonge en une représentation (notée aussi $R$) $R: W \to GL(M)$ qui possède par construction les propriétés H(R). On dit que $R$ est obtenue par la \textbf{construction fondamentale}. On pose $G:= Im(R)$.\\
Si l'on change d'arbre couvrant ou bien si l'on change de racines, on obtient une représentation équivalente. Par contre si l'on change l'un des $\alpha_{e}$ ou bien l'un des $l_{e}$ on obtient des représentations inéquivalentes. De plus on obtient ainsi toutes les représentations de réflexion de $W$ qui satisfont à H(R) à équivalence près.
\end{theorem}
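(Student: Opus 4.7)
Ma stratégie se décompose en trois volets: prolongement de $s \mapsto \zeta_s$ en un homomorphisme de $W$ (d'où H(R)), équivalence sous changement d'arbre couvrant ou de racine, enfin exhaustivité et inéquivalence. Pour le prolongement, je vérifierais les relations $\zeta_s^2 = \mathrm{Id}$ (immédiate, $\zeta_s$ étant une réflexion par définition) et $(\zeta_s \zeta_t)^{m_{st}} = \mathrm{Id}$, en séparant l'action sur le plan $P := \mathrm{Vect}(a_s, a_t)$ et sur les autres vecteurs $a_u$. Sur $P$, les choix faits imposent que le produit des coefficients $\lambda, \mu$ du point 1) de l'introduction vaut $\alpha_e$, racine de $v_{m_e}$, d'où l'ordre $m_{st}$ de $\zeta_s \zeta_t$ sur $P$ par ce même point (ses valeurs propres étant alors $e^{\pm 2ik\pi/m_{st}}$ avec $k$ premier à $m_{st}$, donc différentes de $1$ dès que $m_{st} \ge 3$). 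Pour $u \ne s, t$, l'observation-clé est que $\zeta_s \zeta_t(a_u) - a_u \in P$: en posant $\phi$ la restriction de $\zeta_s \zeta_t$ à $P$ et $w := \zeta_s \zeta_t(a_u) - a_u$, une récurrence immédiate donne $(\zeta_s \zeta_t)^k(a_u) = a_u + (1 + \phi + \cdots + \phi^{k-1})(w)$. Puisque $\phi$ est d'ordre $m_{st} \ge 3$ sur $P$ sans valeur propre $1$, on a $1 + \phi + \cdots + \phi^{m_{st}-1} = 0$ sur $P$, d'où $(\zeta_s \zeta_t)^{m_{st}}(a_u) = a_u$. Le cas $m_{st} = 2$ (sommets non adjacents) se règle par un calcul direct de commutation, distinguant selon l'adjacence éventuelle de $u$ à $s$ ou $t$. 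Les trois conditions H(R) s'en déduisent alors sans peine: $R(s)$ est réflexion de vecteur $a_s$, $\mathcal{A}$ est base par hypothèse, et $R(s)R(t)$ est d'ordre exactement $m_{st}$ (majoré par les relations de tresse, minoré par l'action sur $P$).

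L'analyse des équivalences repose sur un outil commun: un changement de base diagonal $a_s \mapsto \mu_s a_s$ transforme chaque coefficient $\lambda_{s,t}$ (défini par $\zeta_s(a_t) = a_t + \lambda_{s,t} a_s$) en $\lambda_{s,t} \mu_t/\mu_s$, tout en préservant le produit $\lambda_{s,t}\lambda_{t,s}$. L'équivalence sous changement de $(T,s_0)$ vers $(T', s'_0)$ résulte alors de la résolution, sur l'arbre $T'$, d'un système de type différences toujours soluble (à constante globale près). Pour l'exhaustivité, toute $R$ satisfaisant H(R) fournit des $\alpha_e := \lambda_{s,t}\lambda_{t,s}$, qui sont bien racines de $v_{m_e}$ par le point 1); on pose $l_e := \lambda_{s,t}$ sur $E'(T)$, puis une renormalisation appropriée de la base identifie $R$ à une représentation de la construction fondamentale associée à ces paramètres.

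Pour l'inéquivalence, changer $\alpha_e$ modifie la trace de $R(s)R(t)$ (la somme $\zeta + \zeta^{-1}$ des valeurs propres non triviales change), donc la classe d'équivalence; changer les $l_e$ à produit $\alpha_e$ fixé modifie des invariants de type cohomologique sur $E'(T)$ qu'aucune renormalisation diagonale ne peut absorber, celles-ci étant déjà entièrement employées à la normalisation sur $T$. L'obstacle principal me paraît être précisément cette dernière assertion d'inéquivalence sous changement de $l_e$: elle réclame la construction explicite d'un invariant non trivial, typiquement la trace de $R(s_1 s_2 \cdots s_r)$ le long d'un cycle obtenu en complétant $T$ par une arête de $E'(T)$, dont la valeur dépend essentiellement du ratio $l_e/l_{e'}$ sur l'arête hors-arbre. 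La vérification des relations de tresse sur les vecteurs $a_u$ extérieurs est quant à elle technique mais uniformément traitable par l'argument spectral esquissé ci-dessus.
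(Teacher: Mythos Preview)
Your proposal is correct and follows the same essential strategy as the paper: diagonal base changes $a_s \mapsto \mu_s a_s$ to establish equivalence under change of $(T,s_0)$, and trace invariants to establish inequivalence under change of parameters. A few points of comparison are worth noting.

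First, you supply a full argument for the extension of $s \mapsto \zeta_s$ to a homomorphism (the spectral argument on $P$ combined with the identity $1+\phi+\cdots+\phi^{m_{st}-1}=0$ on $P$), whereas the paper simply asserts this ``par construction''. Your treatment here is more complete than the paper's.

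Second, for equivalence, the paper proceeds more concretely than you do: it changes the root one adjacent step at a time (Proposition~8), and for the tree it invokes \"Ore's theorem to reduce to a single edge swap within one cycle, then computes the $\lambda_j$ explicitly along the two arcs of that cycle (Proposition~9). Your formulation as ``un système de type différences toujours soluble sur un arbre'' captures the same mechanism more abstractly; both work, and yours explains \emph{why} the explicit constructions of the paper must succeed.

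Third, for inequivalence under change of $l_e$, you correctly identify the key invariant as the trace of the product $s_1\cdots s_m$ along the cycle closed by $e$. The paper carries out this computation explicitly (Proposition~10), showing that this trace is an affine function of $l_{e'}$ with nonzero leading coefficient $\prod_{k}\alpha_{k,k+1}$, and handles separately the case where the cycle has chords by passing to a chordless subcycle. Your ``invariants de type cohomologique'' phrasing is suggestive but the actual proof, both in your sketch and in the paper, rests on this trace computation; the chord case is the one detail you would still need to address.

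Finally, you address exhaustivity (any $R$ satisfying H(R) arises from the construction after diagonal renormalisation), which the paper states in the theorem but does not prove separately; your argument for it is the natural one.
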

Soient $\sigma$ un automorphisme de $K$ et  $\Phi$ l'espace vectoriel des formes $\sigma$-sesquilinéaires invariantes par $G$. Alors $dim\, \Phi \leqslant 1$. Dans la plupart des cas on a $dim\, \Phi =0$. C'est l'une des raisons pour lesquelles on n'utilisera pas les éléments de $\Phi$ lorsque celui-ci est $\neq 0$.

Avec les hypothèses du théorème, le changement de racine permet de distinguer les systèmes de racines de type $B_{n}$ et $C_{n}$.
\section{Généralités sur les groupes de réflexion}
\subsection{Quelques propriétés du produit de deux réflexions}
On rappelle ici qu'une réflexion d'un espace vectoriel $M$ sur un corps $K$ de caractéristique $\neq 2$ est un élément $r$ de $GL(M)$, d'ordre $2$, tel que $H(r):=Ker(r-Id_{M})$ est un hyperplan de $M$. Un générateur de $Im(r-Id_{M})$ s'appelle un vecteur directeur de $r$. On a $M=H(r)\oplus Im(r-Id_{M})$. On pose: $<v_{r}^{-}>=Im(r-id_{M})$.

Dans toute la suite on utilise les résultats et notations de \cite{Z}. 

On fait l'hypothèse H(1) suivante, valable dans toute la suite de ce travail:
\begin{hypothesis}\label{H1}
$K$ est un corps de caractéristique $0$ et $M$ est un $K$-espace vectoriel de dimension finie.

\end{hypothesis}
Soient $r$ et $s$ deux réflexions de $M$, de vecteurs directeurs $a$ et $b$ respectivement et d'hyperplans de points fixes $H(r)$ et $H(s)$. Si $(a,b)$ est un système libre, on peut écrire:\\
$r(a)=-a, \quad r(b)=b+c(r,a;s,b) a$ \qquad et \qquad $s(a)=a+c(s,b;r,a) b, \quad s(b)=-b$, avec $c(r,a;s,b)$ et $c(s,b;r,a)$ deux éléments de $K$.
\begin{definition}\label{D1}
On pose $C(r,s):=c(r,a;s,b)c(s,b;r,a)$ et on appelle $C(r,s)$ le \textbf{coefficient de Cartan} du couple $(r,s)$.
\end{definition}
\begin{proposition}\label{P1}
On garde les hypothèses et notations précédentes. On a:
\begin{enumerate}
  \item $C(r,s)$ ne dépend que de $r$ et $s$ et $C(r,s)=C(s,r)$.
  \item \begin{enumerate}
  \item $rs$ est d'ordre fini $\geqslant 3$ si et seulement si il existe un entier $k$ premier à $n$ tel que $C(r,s)=4\cos ^{^2}\frac{k\pi}{n}$ si et seulement si $C(r,s)$ est racine du polynôme $v_{n}(X)$. 
  \item $rs$ est d'ordre $2$ si et seulement si $c(r,a;s,b)=c(s,b;r,a)=0$.
  
\end{enumerate}

\end{enumerate}
\end{proposition}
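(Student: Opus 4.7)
Pour le point (1), la stratégie repose sur le comportement des deux coefficients sous changement de vecteurs directeurs. En remplaçant $a$ par $\lambda a$ avec $\lambda\in K^{*}$, un calcul direct donne $c(r,\lambda a;s,b)=\lambda^{-1}c(r,a;s,b)$ et $c(s,b;r,\lambda a)=\lambda\, c(s,b;r,a)$, de sorte que leur produit est inchangé; un argument symétrique couvre le changement d'échelle de $b$. Ainsi $C(r,s)$ ne dépend que de $r$ et $s$, et la symétrie $C(r,s)=C(s,r)$ résulte de la commutativité du produit dans $K$.

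Pour le point (2), je me ramène à la dimension $2$. Le plan $V:=\langle a,b\rangle$ est stable par $r$ et par $s$, donc par $rs$. Posant $\lambda:=c(r,a;s,b)$ et $\mu:=c(s,b;r,a)$, un calcul direct dans la base $(a,b)$ donne
\[ [rs|_{V}]_{(a,b)}=\begin{pmatrix} \lambda\mu-1 & -\lambda \\ \mu & -1 \end{pmatrix}, \]
de déterminant $1$ et de trace $C(r,s)-2$; son polynôme caractéristique est donc $X^{2}-(C(r,s)-2)X+1$. Pour relier l'ordre de $rs$ à celui de $rs|_{V}$, on exhibe un supplémentaire $(rs)$-stable sur lequel $rs$ agit trivialement: dès que $\lambda\mu\neq 4$, $H(r)\cap H(s)$ convient (condition automatiquement réalisée dans tous les cas d'ordre fini $\geqslant 2$, comme on le vérifie a posteriori).

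L'étape clef est ensuite d'exploiter la caractéristique nulle: tout élément d'ordre fini de $GL(M)$ est diagonalisable, et son ordre est le ppcm des ordres de ses valeurs propres. Si $rs$ est d'ordre $n\geqslant 3$, les deux valeurs propres de $rs|_{V}$ (de produit $1$) sont $\zeta,\zeta^{-1}$ avec $\zeta$ racine primitive $n$-ième de l'unité, d'où $C(r,s)-2=\zeta+\zeta^{-1}=2\cos(2k\pi/n)$ avec $(k,n)=1$, puis $C(r,s)=4\cos^{2}(k\pi/n)$ via l'identité $2+2\cos(2\theta)=4\cos^{2}\theta$. Réciproquement, pour $n\geqslant 3$ et $(k,n)=1$, cette égalité force les deux valeurs propres à être distinctes et toutes deux d'ordre exactement $n$, d'où $rs|_{V}$ d'ordre $n$, et donc $rs$ aussi. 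L'équivalence avec $v_{n}(C(r,s))=0$ découle alors de la définition de $v_{n}$ rappelée en introduction (reprise de \cite{Z}).

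Pour l'ordre $2$, on a $rs$ d'ordre $2$ si et seulement si $rs|_{V}=-\mathrm{Id}_{V}$, ce qui équivaut manifestement à $\lambda=\mu=0$. L'obstacle principal, bien que mineur, est de vérifier que lorsque $\lambda\mu=0$ sans que les deux soient nuls, la matrice ci-dessus, de polynôme caractéristique $(X+1)^{2}$ mais distincte de $-I_{2}$, n'est pas semi-simple et $rs$ est alors d'ordre infini; ce point se règle par un examen direct de la forme de Jordan, et l'on conclut.
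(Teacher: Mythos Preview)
Votre démonstration est correcte. Pour le point (1), votre argument de changement d'échelle est exactement celui de l'article. Pour le point (2), l'article se contente de renvoyer à la référence \cite{Z} en écrivant que le résultat est bien connu; vous fournissez au contraire une preuve complète, qui est précisément l'argument naturel attendu: restriction au plan $V=\langle a,b\rangle$, calcul du polynôme caractéristique $X^{2}-(C(r,s)-2)X+1$ de $rs|_{V}$, puis exploitation de la diagonalisabilité en caractéristique nulle pour identifier les valeurs propres aux racines primitives $n$-ièmes de l'unité.

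Une remarque mineure sur votre passage au supplémentaire $H(r)\cap H(s)$: pour que cet espace soit bien un supplémentaire de $V$, il faut vérifier que $V\cap H(r)\cap H(s)=\{0\}$ dès que $C(r,s)\neq 4$, ce qui demande un petit calcul (c'est l'objet de la proposition~\ref{P unipotent} de l'article). Votre formulation \og a posteriori\fg{} est correcte mais peut sembler circulaire au premier abord; on peut l'éviter en observant directement que $r$ et $s$ agissent trivialement sur le quotient $M/V$ (puisque $[M,r]\subset V$ et $[M,s]\subset V$), d'où $P_{rs}(X)=(X-1)^{m-2}\bigl(X^{2}-(C(r,s)-2)X+1\bigr)$ sans hypothèse sur $C(r,s)$, ce qui donne immédiatement que l'ordre de $rs$ coïncide avec celui de $rs|_{V}$. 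Ce détail mis à part, votre preuve est plus explicite que celle de l'article et tout à fait satisfaisante.
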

\begin{proof}
Pour le 1), on peut remarquer que si l'on remplace $a$ par $\lambda a$ et $b$ par $\mu b$ avec $\lambda\mu \in K^{*} (= K-\{0\}) $ alors on a $c(r,\lambda a;s,\mu b)=\lambda^{-1}\mu c(r,a;s,b)$ et $c(s,\mu b;r,\lambda a)=\lambda\mu^{-1}c(s,b;r,a)$. \\
Il est clair que $C(r,s)$ = $C(s,r)$.\\
Le 2) est bien connu. On peut en trouver une démonstration dans \cite{Z}.
\end{proof}
\begin{notation}
Soient $K$ un corps, $M$ un $K$-espace vectoriel de dimension finie et $g$ un élément de $GL(M)$. On appelle $P_{g}(X)$ le polynôme caractéristique de $g$.
\end{notation}
\begin{proposition}\label{P2}
On garde les hypothèses (H1) et on suppose que $M$ est de dimension $m$. Soient $r$ et $s$ deux réflexions de $M$ de vecteurs directeurs $a$ et $b$ respectivement. On suppose que $(a,b)$ est un système libre. Alors:
\[
P_{rs}(X)=(X-1)^{m-2}(X^{2}-(-2+C(r,s))X+1)
\]
En particulier la trace de $rs$ est $Tr(rs)=m-4+C(r,s)$.
\end{proposition}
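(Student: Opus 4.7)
Mon idée est d'exploiter le fait que le plan $V:=\mathrm{Vect}(a,b)$ est stable par $r$ et $s$ (donc par $rs$), et que sur l'espace quotient $M/V$ les deux réflexions agissent trivialement, puisque $Im(r-Id)=Ka\subset V$ et $Im(s-Id)=Kb\subset V$. Le produit $rs$ induit donc également l'identité sur $M/V$, ce qui fournit le facteur $(X-1)^{m-2}$ du polynôme caractéristique par multiplicativité sur la suite exacte $0\to V\to M\to M/V\to 0$.

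Je réduirais ensuite le problème au calcul du polynôme caractéristique de la restriction $(rs)|_V$ dans la base $(a,b)$. En posant $c_1 := c(r,a;s,b)$ et $c_2 := c(s,b;r,a)$, les formules définissant les deux réflexions donnent immédiatement $rs(a) = (c_1c_2 - 1)a + c_2 b$ et $rs(b) = -c_1 a - b$, d'où la matrice
\[
\begin{pmatrix} c_1c_2 - 1 & -c_1 \\ c_2 & -1 \end{pmatrix},
\]
de trace $c_1c_2 - 2 = -2 + C(r,s)$ et de déterminant $1$ (ce dernier étant d'ailleurs forcé par $\det(r)=\det(s)=-1$). Le polynôme caractéristique de $(rs)|_V$ est donc $X^2 - (-2+C(r,s))X + 1$.

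En combinant les deux étapes on obtient la formule annoncée, et la trace de $rs$ sur $M$ vaut $(m-2) + (-2+C(r,s)) = m-4+C(r,s)$. Il n'y a pas d'obstacle conceptuel particulier: le seul point qui demanderait de l'attention si l'on voulait travailler avec un supplémentaire concret au lieu du quotient serait que $V$ n'admet pas nécessairement de supplémentaire stable par $\langle r,s\rangle$ (par exemple $V\cap H(r)\cap H(s)$ peut être non nul), mais le passage à $M/V$ évite précisément cette question grâce à la multiplicativité du polynôme caractéristique.
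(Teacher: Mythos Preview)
Your argument is correct and very close in spirit to the paper's own proof: both reduce to computing $rs$ on the plane $V=\langle a,b\rangle$ and obtain the same $2\times2$ matrix there. The only difference is in how the factor $(X-1)^{m-2}$ is produced. The paper asserts the existence of vectors $c_{1},\dots,c_{m-2}$, each fixed by both $r$ and $s$, completing $(a,b)$ to a basis of $M$; in other words it works with an explicit complement to $V$ contained in $H(r)\cap H(s)$. You instead pass to the quotient $M/V$, observe that $r$ and $s$ (hence $rs$) act trivially there because $\mathrm{Im}(r-\mathrm{Id})$ and $\mathrm{Im}(s-\mathrm{Id})$ lie in $V$, and invoke multiplicativity of the characteristic polynomial along $0\to V\to M\to M/V\to 0$.

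Your route is marginally cleaner: it avoids having to justify that such a complement exists, a point the paper states without proof and which does require a short check in the borderline case $C(r,s)=4$ (where $V\cap H(r)\cap H(s)\neq 0$, so one cannot simply take $H(r)\cap H(s)$ itself as the complement). The paper's version, on the other hand, has the advantage of giving a concrete basis in which the full matrix of $rs$ is block-triangular, which can be convenient for later computations.
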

\begin{proof}
Il existe des éléments $c_{i}$ de $M$ $(1\leqslant i\leqslant m-2)$ tels que\\ $(a,b,c_{1}\cdots c_{m-2})$ soit une base de $M$ et $r(c_{i})=s(c_{i})=c_{i}$ $\forall i$. Il suffit donc de se placer dans le sous-espace de $M$ engendré par $a$ et $b$.
On a alors:
\[
r=\left(\begin{array}{cc}-1 & c(r,a;b,s) \\0 & 1\end{array}\right), s=\left(\begin{array}{cc}1 & 0 \\c(s,b;r,a) & -1\end{array}\right)
 \]
 d'où 
  \[ 
  rs=\left(\begin{array}{cc}-1+C(r,s) & -c(r,a;s,b) \\c(s,b;r,a) & -1\end{array}\right)
  \]
donc $P_{rs}(X)=(X-1)^{m-2}(X^{2}-(-2+C(r,s))X+1)$.
\end{proof}
\subsection{Caractérisation du produit de deux réflexions qui est unipotent.}

\begin{proposition}\label{P unipotent}
On garde les hypothèses H(1) et les notations précédentes. Les conditions suivantes sont équivalentes:
\begin{enumerate}
  \item $rs$ est une application unipotente ($\neq Id_{M}$);
  \item $C(r,s)=4$;
  \item $H(r)\cap H(s)\,\cap <a,b>\neq {0}$;
  \item $H(r)$ = $H(s)$.
\end{enumerate}
\end{proposition}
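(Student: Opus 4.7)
The plan is to use Proposition~\ref{P2} for the equivalence (1)$\Leftrightarrow$(2) and then a direct computation in the basis $(a, b, c_1, \ldots, c_{m-2})$ from the proof of Proposition~\ref{P2} to tie (3) and (4) to the condition $C(r,s) = 4$.

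First I would observe that the $2\times 2$ block of $rs$ in the plane $\langle a, b\rangle$ has determinant $1$ (its characteristic polynomial has constant term $1$), and that $rs \neq Id_M$ because $(a,b)$ being free forces $r \neq s$. Hence $rs$ is unipotent iff the quadratic factor of $P_{rs}(X) = (X-1)^{m-2}\bigl(X^2 + (2 - C(r,s))X + 1\bigr)$ equals $(X-1)^2$, i.e.\ iff $C(r,s) = 4$. This gives (1)$\Leftrightarrow$(2).

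Next I would compute $H(r)$ and $H(s)$ explicitly in the chosen basis: a vector $\alpha a + \beta b + \sum \gamma_i c_i$ lies in $H(r)$ iff $2\alpha = \beta\, c(r,a;s,b)$, and in $H(s)$ iff $2\beta = \alpha\, c(s,b;r,a)$. Combining the two linear conditions gives $\alpha\bigl(4 - C(r,s)\bigr) = 0$, so a nonzero solution with $(\alpha, \beta) \neq (0,0)$ exists precisely when $C(r,s) = 4$; this is (2)$\Leftrightarrow$(3). When $C(r,s) = 4$, the intersection $H(r)\cap H(s)$ actually contains the $(m-1)$-dimensional subspace spanned by $c_1, \ldots, c_{m-2}$ together with $2a + c(s,b;r,a)\,b$, forcing $H(r) = H(s)$ and so yielding (3)$\Rightarrow$(4); the converse (4)$\Rightarrow$(3) is immediate, since $H(r) \cap \langle a, b\rangle$ has dimension at least $1$ by the dimension formula.

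There is no real conceptual obstacle; the only point to verify carefully is that the vector $2a + c(s,b;r,a)\,b$ genuinely lies in both $H(r)$ and $H(s)$ when $C(r,s) = 4$, which reduces directly to the defining identity $c(r,a;s,b)\,c(s,b;r,a) = C(r,s) = 4$.
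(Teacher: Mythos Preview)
Your proof is correct and follows essentially the same approach as the paper: both use Proposition~\ref{P2} for (1)$\Leftrightarrow$(2), and both establish (3)$\Leftrightarrow$(2) via the same $2\times 2$ linear system on $(\alpha,\beta)$ whose determinant is $4-C(r,s)$. The only minor difference is in how (4) is reached: the paper argues (1)$\Rightarrow$(4) by observing that the $1$-eigenspace of the unipotent $rs$ is a hyperplane and must coincide with both $H(r)$ and $H(s)$, whereas you go (2)$\Rightarrow$(4) by exhibiting $m-1$ independent vectors in $H(r)\cap H(s)$. Your route is slightly more explicit and self-contained; the paper's is a touch more conceptual but leaves implicit the verification that the $1$-eigenspace actually equals $H(r)$ (and $H(s)$).
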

\begin{proof}
D'après la proposition 2 il est clair que 1. et 2. sont équivalents. Comme $H(r)$ est un hyperplan, on a 4. $\Rightarrow$ 3..
Le sous-espace propre de $rs$ correspondant à la valeur propre $1$ est un hyperplan de $M$ lorsque $rs$ est unipotent ($\neq Id_{M}$), donc il est égal à $H(r)$ et à $H(s)$ d'où 1. $\Rightarrow$ 4.
Supposons la condition 3. satisfaite et soit 
\[
x=\alpha a+\beta b \in H(r)\cap H(s)\,\cap <a,b>\neq {0}
\]
On a:
\begin{equation*}
\begin{array}{ccccccc}
	r(x) & = & x & = & -\alpha a+\beta(b+c(r,a;s,b)a) & = & \alpha a+\beta b\\
	s(x) & = & x & = & \alpha(a+c(s,b;r,a)b)-\beta b & = & \alpha a+\beta b
	\end{array}
\end{equation*}
	
 On obtient le système suivant:

\begin{equation*}
\begin{array}{ccc}
2\alpha - \beta c(r,a;s,b) & = & 0\\
\alpha c(s,b;r,a) -2\beta & = & 0
\end{array}
\end{equation*}

et $x \neq 0$ si et seulement si le déterminant de ce système est nul, c'est  à dire si $-4+C(r,s)=0$, donc 3. $\Rightarrow$ 2.
\end{proof}
Nous étendons maintenant la définition de $C(r,s)$ au cas où $<a>$ = $<b>$ où $a$ (resp. $b$) est un vecteur directeur de $r$ (resp. $s$).
\begin{definition} Généralisation du coefficient de Cartan.\\
On garde les hypothèses de la proposition 1. Soient $r$ et $s$ deux réflexions de $M$, de vecteurs directeurs $a$ et $b$ respectivement. On suppose que $<a>$ = $<b>$: il existe $\lambda \in K^{*}$ tel que $b=\lambda a$. On a \\
$r(b)=-b=b-2b=b-2\lambda a$ et $s(a)=-a =a-2a=a-2\lambda^{-1}b$. On pose $C(r,s):=(-2\lambda)(-2\lambda^{-1})=4$:coefficient de Cartan du couple $(r,s)$.
\end{definition}
On a alors la caractérisation suivante:
\begin{proposition}
Avec les hypothèses de la proposition 1, soient $r$ et $s$ deux réflexions distinctes de $M$. Alors $rs$ est une application unipotente si et seulement si $C(r,s)=4$.
\end{proposition}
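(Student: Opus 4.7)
La démonstration se scinde selon que les vecteurs directeurs $a$ et $b$ de $r$ et $s$ forment un système libre ou engendrent la même droite de $M$.

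Dans le premier cas, la Proposition~\ref{P unipotent} fournit l'équivalence entre les assertions $rs$ unipotent non trivial et $C(r,s)=4$. Comme $s$ est involutive et $r\neq s$, on a $rs\neq Id_M$, donc la condition de non-trivialité ne crée aucune restriction supplémentaire, et l'équivalence voulue en résulte.

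Dans le second cas, écrivons $b=\lambda a$ avec $\lambda \in K^{*}$. Par la définition généralisée du coefficient de Cartan, on a $C(r,s)=4$; il reste à vérifier que $rs$ est unipotent. Je calculerais d'abord $s(a)=\lambda^{-1}s(b)=-a$, ce qui, combiné à $r(a)=-a$, donne $rs(a)=a$: la droite $L:=\langle a\rangle$ est fixée point par point par $rs$. Ensuite, puisque $Im(r-Id_M)=\langle a\rangle=L=Im(s-Id_M)$, les applications $r$ et $s$ induisent chacune l'identité sur le quotient $M/L$; il en est donc de même de $rs$. Par conséquent $(rs-Id_M)(M)\subseteq L$, et puisque $rs$ fixe $L$ point par point, on obtient $(rs-Id_M)^{2}=0$, ce qui établit l'unipotence de $rs$.

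Le point qui demande un peu d'attention est la cohérence entre les deux cas: dans le second, la valeur $C(r,s)=4$ étant posée par convention, c'est l'implication allant de $C(r,s)=4$ vers l'unipotence de $rs$ qu'il faut réellement établir, ce que fait le calcul direct ci-dessus; la réciproque y est automatique puisque $C(r,s)=4$ est donné sans condition. Aucune difficulté substantielle n'est attendue, la vérification du second cas se ramenant à observer que deux réflexions distinctes de même droite directrice agissent trivialement modulo cette droite.
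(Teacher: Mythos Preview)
Your proof is correct. Both you and the paper split into the same two cases according to whether $(a,b)$ is free or not, and the first case is handled identically via Proposition~\ref{P unipotent}.

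In the second case the arguments diverge. The paper observes that when $\langle a\rangle=\langle b\rangle$ and $r\neq s$ the hyperplanes $H(r)$ and $H(s)$ must be distinct, reduces to $\dim M=2$ by passing to a complement of $H(r)\cap H(s)$, and then computes $rs$ explicitly in a basis $(x,a)$ with $x$ spanning $H(r)$, obtaining a unipotent matrix. Your argument instead works globally: from $\mathrm{Im}(r-Id_M)=\mathrm{Im}(s-Id_M)=L$ you deduce that $r$ and $s$ act trivially on $M/L$, hence so does $rs$, and together with $rs|_L=Id_L$ this forces $(rs-Id_M)^2=0$. Your route is shorter and coordinate-free, avoiding both the dimension reduction and the explicit calculation of $s(x)$; the paper's computation, on the other hand, displays $rs$ concretely as a transvection and makes visible that $rs\neq Id_M$ (which you do not need, since the identity is unipotent anyway).
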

\begin{proof}
Soient $a$ et $b$ des vecteurs directeurs de $r$ et $s$ respectivement. On a déjà vu (proposition 1) le résultat lorsque $(a,b)$ est un système libre. On suppose donc que $<a>$ = $<b>$. $H(r)$ et $H(s)$ sont deux hyperplans distincts et $H(r)\cap H(s)$ est de codimension $2$ dans $M$. On peut donc supposer que $H(r)\cap H(s)={0}$, c'est à dire que $M$ est de dimension $2$, $H(r)$ et $H(s)$ étant alors des droites. Si $H(r)=<x>$ et $H(s)=<y>$, on a $M=<a,x>=<a,y>$ avec $y=\lambda x+\mu a$ et $\lambda\mu\neq0$. On obtient $s(y)= y=\lambda s(x)-\mu a=\lambda x+\mu a$ donc $s(x)=x+2\mu\lambda^{-1}a$. Dans la base $(x,a)$ de $M$ $rs(x)=x-2\mu\lambda^{-1}a$ et $rs(a)=a$, $rs$ est donc unipotente et d'après ce qui précède, $C(r,s)=4$.
\end{proof}
\subsection{Premières propriétés d'un groupe engendré par des réflexions}

Nous donnons d'abord quelques définitions dont nous aurons besoin dans toute la suite.\\
\begin{definition}
\begin{enumerate}
  \item Soit $G$ un groupe engendré par un ensemble $S$ d'involutions. on dit que $G$ est \textbf{$2$-sphérique} si $\forall(s,t)\in S\times S$, l'ordre $m_{st}$ de $st$ est fini.
  \item soient $G$ et $G'$ deux groupes $2$-sphériques engendrés par $S$ et $S'$ respectivement. Soit $R:G\to G'$ un morphisme tel que $R(S)\subset S'$. On dit que $R$ est un \textbf{bon} morphisme si $\forall (s,t)\in S\times S$, $R(s)R(t)$ a le même ordre que $st$.
\end{enumerate}
\end{definition}
Dans toute la suite de ce travail, $K$ est un corps de caractéristique $0$, $M$ est un $K$-espace vectoriel de dimension finie $n$ et $G$ est un sous-groupe de $GL(M)$ engendré par un ensemble $S$ de réflexions (on parle dans ce cas de \textbf{système de réflexion} $(G,S)$).

On définit le \textbf{graphe de Coxeter} de $(G,S)$, $\Gamma(G)$ de la manière suivante:
\begin{itemize}
  \item Les sommets de $\Gamma(G)$ sont les éléments de $S$;
  \item $(s,t)\in S\times S$ est une arête de $\Gamma(G)$ si $m_{st}\geqslant 3$, où $m_{st}$ est l'ordre de $st$;
  \item on décore chaque arête de $\Gamma(G)$ par le symbole $m_{st}$.
\end{itemize}
\begin{hypothesis}\hfill \label{H2}
\begin{enumerate}
  \item On a $|S|$ = $dim\,M$ ($=n$);
  \item $\forall(s,t)\in S\times S$, l'ordre $m_{st}$ de $st$ est fini;
  \item le graphe $\Gamma(G)$ est connexe;
  \item si $s \in S$ on appelle $a_{s}$ un vecteur directeur de $s$; alors $\mathcal{A}:=(a_{s}|s\in S)$ est une base de $M$
\end{enumerate}

\end{hypothesis}
Soit $(G,S)$ un système de réflexion. On pose $\textbf{T}:=\{gsg^{-1}|g\in G, s\in S\}$ ensemble des réflexions du système $(G,S)$.

On peut remarquer que toutes les réflexions de $G$ sont de déterminant $-1$, donc $G$ contient un sous-groupe, noté $G^{+}$, formé des éléments de $G$ de déterminant $+1$, ou encore qui sont produits d'un nombre pair d'éléments de $S$:
$G^{+}=G\cap SL(M)$. Il est clair que $G^{+}$ est d'indice $2$ dans $G$. On a toujours $D(G) (=[G,G])\subset G^{+}$.

Comme $G/D(G)$ est engendré par les images des éléments de $S$, on voit que c'est un $2$-groupe commutatif élémentaire d'ordre $2^{\omega}$, où $\omega$ est le nombre des classes de conjugaison contenues dans $\textbf{T}$.
\begin{proposition}\hfill \label{P5}
\begin{enumerate}
  \item $C_{GL(M)}(G)$ est formé d'applications scalaires;
  \item soit $z\in Z(G)$. Si $\det z=1$, l'ordre de $z$ est un diviseur de $n$; si $\det z=-1$, l'ordre de $z$ est un diviseur de $2n$;
  \item si $C_{M}(G) \neq {0}$, on a $Z(G) =1$.
\end{enumerate}

\end{proposition}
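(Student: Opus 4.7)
L'assertion 1 est le cœur de l'énoncé et entraîne facilement les deux autres. Le plan est le suivant: pour $z \in C_{GL(M)}(G)$ et $s \in S$, $z$ commute avec $s - Id_M$, donc laisse stable $Im(s - Id_M) = \langle a_s \rangle$. Il existe donc $\lambda_s \in K^{*}$ tel que $z(a_s) = \lambda_s a_s$. Pour montrer que tous les $\lambda_s$ coïncident, je prendrais une arête $(s,t)$ de $\Gamma(G)$, donc avec $m_{st} \geqslant 3$; la proposition \ref{P1} assure alors que $C(s,t) = 4\cos^{2}(k\pi/m_{st}) \neq 0$ (car $k$ est premier à $m_{st} \geqslant 3$), et en particulier le facteur $c(s, a_s; t, a_t)$ est non nul. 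L'égalité $z \circ s = s \circ z$ appliquée à $a_t = z^{-1}(z(a_t))$ s'écrit alors
\[
\lambda_t a_t + c(s, a_s; t, a_t)\lambda_s a_s \;=\; \lambda_t a_t + \lambda_t\, c(s, a_s; t, a_t)\, a_s,
\]
d'où $\lambda_s = \lambda_t$ par indépendance de $(a_s, a_t)$. La connexité de $\Gamma(G)$ (hypothèse \ref{H2}) propage cette égalité à tout $S$ de proche en proche, et puisque $\mathcal{A}$ est une base de $M$, $z$ est bien scalaire.

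Les assertions 2 et 3 se déduisent immédiatement. Pour 2, $Z(G) \subset C_{GL(M)}(G)$, donc tout $z \in Z(G)$ est de la forme $\lambda Id_M$; alors $\det z = \lambda^{n}$ force $\lambda^{n} = 1$ si $\det z = 1$ (d'où l'ordre de $z$ divise $n$) et $\lambda^{n} = -1$ si $\det z = -1$, auquel cas $\lambda^{2n} = 1$ (d'où l'ordre de $z$ divise $2n$). Pour 3, soient $v \in C_M(G) - \{0\}$ et $z \in Z(G)$; par 1, $z = \lambda Id_M$, mais aussi $z \in G$ donc $z(v) = v$, ce qui impose $\lambda = 1$ puisque $v \neq 0$, et finalement $z = Id_M$.

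L'unique étape non triviale est l'identification $\lambda_s = \lambda_t$ le long des arêtes de $\Gamma(G)$: elle repose sur le fait essentiel, issu de la proposition \ref{P1}, que les coefficients de Cartan hors-diagonaux $c(s, a_s; t, a_t)$ ne s'annulent pas lorsque $m_{st} \geqslant 3$ (sinon $C(s,t)$ serait nul et $st$ serait d'ordre $2$ par \ref{P1}.2.b, contradiction). Sans la connexité de $\Gamma(G)$, on n'obtiendrait qu'un centralisateur formé d'applications scalaires \emph{par composante connexe}; c'est la combinaison de cette connexité et de la non-dégénérescence des vraies arêtes qui contraint $Z(G)$ à être aussi restreint.
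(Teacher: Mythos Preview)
Your proof is correct and follows the same strategy as the paper: show that any $z$ centralizing $G$ is diagonal in the basis $\mathcal{A}$, then use the edges of $\Gamma(G)$ together with connectivity to force all eigenvalues to coincide, and deduce 2 and 3 from $z=\lambda\,Id_M$. The only variation is in the key step $\lambda_s=\lambda_t$: the paper obtains it by observing that $z$ also preserves the direction line of the conjugate reflection $sts$ (whose direction vector is $s(a_t)=a_t+\mu_s a_s$), while you get it more directly from $z\circ s=s\circ z$ applied to $a_t$; your computation is slightly more economical but the underlying idea is identical.
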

\begin{proof}
1) Soit $z\in C_{GL(M)}(G)$. Pour tout $t\in \textbf{T}$, si $b$ est un vecteur directeur de $t$, $z(b)=\lambda_{t}b$ avec $\lambda_{t} \in K^{*}$. Soit $e=(s,t)\in E(\Gamma(G))$. On a $s(a_{t})=a_{t}+\mu_{s}a_{s}$ , $t(a_{s})=a_{s}+\mu_{t}a_{t}$ et 
$sts(a_{s})=a_{s}-\mu_{t}(a_{t}+\mu_{s}a_{s})$ avec $\mu_{s}\mu_{t}\neq 0$. On obtient $z(a_{s})=\lambda_{s}a_{s}$, $z(a_{t})=\lambda_{t}a_{t}$ et $z(a_{t}+\mu_{s}a_{s})=\lambda_{sts}(a_{t}+\mu_{s}a_{s})$; mais $z(a_{t}+\mu_{s}a_{s}) = \lambda_{t}a_{t}+\mu_{s}\lambda_{s}a_{s}$. Comme $(a_{s},a_{t})$ est un système libre et comme $\mu_{s}\neq 0$, on obtient $\lambda_{sts}=\lambda_{s}=\lambda_{t}$. Comme $\Gamma(G)$ est connexe, on a le résultat: $\exists \lambda \in K^{*}$ tel que $\forall m \in M, z(m)=\lambda m$ puisque l'ensemble $(a_{s}|s \in S)$ est une base de $M$: $z$ est une application scalaire.\\
2) On a alors $\det z = \lambda^{n}$. Si $\det z = 1$, $\lambda^{n}=1$ et l'ordre de $z$ est un diviseur de $n$; si $\det z = -1$, l'ordre de $z$ est un diviseur de $2n$.\\
3) Si $m \in C_{M}(G)-{0}$, on a $z(m)=m=\lambda m$ donc $\lambda =1$ et $z=1$: $Z(G)={1}$.

\end{proof}
On définit maintenant la matrice de Cartan.
On garde les hypothèses et notations précédentes.\\ On suppose que $S$ = $\{s_{1},s_{2},\cdots,s_{n}\}$ et que pour tout $i$, $a_{i}$ est un vecteur directeur de $s_{i}$. On a $\forall(i,j) (1\leqslant i,j \leqslant n),s_{i}(a_{j})=a_{j}-\lambda _{ij}a_{i}$ où $\lambda_{ij}=-C(s_{i},s_{j})$ et $\lambda_{ij}=\lambda_{ji}=0$ si $s_{i}s_{j}=s_{j}s_{i}\neq 1$.
\begin{definition}\label{D3}
On appelle \textbf{matrice de Cartan} de $G$ (par rapport à la base $\mathcal{A}$ de $M$) la matrice 
\[
Car(G)=(\lambda_{ij})_{1\leqslant i,j\leqslant n}.
\]

\end{definition}
\section{La construction fondamentale} 
Dans toute la suite de ce travail on fait les hypothèses suivantes H(Cox) sur le système de Coxeter $(W,S)$:
\textbf{$(W,S)$ est de rang fini, $2$-sphérique et irréductible}.

Soit $(W,S)$ un système de Coxeter. Le but de cette section est de construire des représentations $R \to GL(M)$ avec $M$ $K$-espace vectoriel de $W$, qui sont telles que les éléments $R(s)$ soient des réflexions de $M$ et $\forall(s,t)\in S\times S,R(s)R(t)$ a le même ordre que $st$.
\begin{notation}
On pose $G:=Im R$.
\end{notation}
On désignera par la même lettre $g\in W$ et $R(g)$ lorsqu'il n'y aura pas de confusion. \\
On appelle $K_{0}$ un corps de décomposition de l'ensemble des polynômes $v_{m_{e}}(X)$ (si $e=(s,t), m_{e}=m_{st})$.
On peut remarquer que l'on peut choisir $K_{0} \subset \mathbb{R}$ et alors $K_{0}$ est un sous-corps réel d'un corps cyclotomique. \textbf{Dans la suite, on fera toujours ce choix pour $K_{0}$.}
\subsection{Quelque résultats de la théorie des graphes}
\begin{proposition}
Soit $\Gamma$ un graphe fini simple (sans boucle ni arête multiple) connexe et soient $\textit{T}$ un \textbf{arbre couvrant} de $\Gamma$ et $s_{0}$ un sommet de $\Gamma$ (appelé \textbf{racine} de \textit{T}). On appelle $E(\Gamma)$ l'ensemble des arêtes de $\Gamma$, $E(\textit{T})$ l'ensemble des arêtes de $\textit{T}$ et on pose $E'(\textit{T}):=E(\Gamma)-E(\textit{T})$.\\
	1) On définit sur l'ensemble des sommets de $\Gamma$ une relation d'ordre, notée $\preccurlyeq$ (qui dépend de $\textit{T}$ et de $s_{0}$) de la manière suivante: $s\preccurlyeq t$ si $s$ et $t$ sont sur la même branche de l'arbre $\textit{T}$ et si la distance de $s$ à $s_{0}$ (dans $\textit{T}$) est $\leqslant $ à la distance de $t$ à $s_{0}$ (dans $\textit{T}$).\\
	2) Soit $e:=(s,t)\in E'(\textit{T})$. Lorsque l'on adjoint cette arête à $\textit{T}$, on obtient un unique circuit
\[
C \,(=C(e)):=\{s_{1}=s,s_{2},\cdots, s_{n-1},s_{n}=t\}
\]
avec $(s_{i},s_{i+1})\in E(\textit{T})\, (1\leqslant i \leqslant n-1)$. Alors il existe un unique $p$ $(1\leqslant p \leqslant n)$ tel que $\forall i\, (1\leqslant p \leqslant n)$ on ait $s_{p} \preccurlyeq s_{i}$. On dit que $s_{p}$ est \textbf{l'entrée} dans le circuit $C$.
\end{proposition}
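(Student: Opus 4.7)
Le plan est d'exploiter la structure d'arbre enraciné. En fixant la racine $s_{0}$, tout sommet $v$ de $T$ est joint à $s_{0}$ par un unique chemin $s_{0}=v_{0},v_{1},\ldots,v_{k}=v$ dans $T$, et les $v_{i}$ sont par définition les ancêtres de $v$. Dire que $s$ et $t$ sont sur la même branche signifie exactement que l'un est ancêtre de l'autre, et alors la condition de distance permet de reconnaître lequel est ancêtre de l'autre: $s\preccurlyeq t$ équivaut ainsi à l'assertion \emph{$s$ est ancêtre de $t$ dans $T$ enraciné en $s_{0}$}. Cette reformulation rend les vérifications directes.

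Pour la partie 1), la réflexivité est immédiate. Pour l'antisymétrie, si $s\preccurlyeq t$ et $t\preccurlyeq s$, alors $s$ et $t$ sont sur la même branche et à même distance de $s_{0}$; comme l'arbre $T$ n'a qu'un seul sommet à chaque distance donnée le long d'une branche fixée, on a $s=t$. Pour la transitivité, si $s\preccurlyeq t\preccurlyeq u$, le chemin de $s_{0}$ à $u$ dans $T$ passe par $t$, donc a pour préfixe le chemin de $s_{0}$ à $t$, lequel passe par $s$; d'où $s\preccurlyeq u$.

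Pour la partie 2), je rappellerais d'abord le fait classique que, $T$ étant un arbre couvrant de $\Gamma$, l'ajout d'une arête $e=(s,t)\in E'(T)$ produit un unique circuit, à savoir l'unique chemin $P_{st}$ de $s$ à $t$ dans $T$ refermé par $e$. Je définirais alors $s_{p}$ comme le plus proche ancêtre commun de $s$ et $t$ dans $T$ enraciné en $s_{0}$, construit comme le dernier sommet commun aux chemins $s_{0}\to s$ et $s_{0}\to t$: ces deux chemins partagent un préfixe $s_{0}\to s_{p}$ puis divergent. Il en résulte que $P_{st}$ se décompose en une montée de $s$ à $s_{p}$ suivie d'une descente de $s_{p}$ à $t$, et que tout sommet $s_{i}$ du circuit admet $s_{p}$ pour ancêtre dans $T$, d'où $s_{p}\preccurlyeq s_{i}$ pour tout $i$. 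L'unicité de $s_{p}$ découle alors de l'antisymétrie: si $s_{q}$ vérifie la même propriété, on a à la fois $s_{p}\preccurlyeq s_{q}$ et $s_{q}\preccurlyeq s_{p}$, donc $s_{p}=s_{q}$.

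L'énoncé ne présente pas d'obstacle conceptuel; le seul point méritant un peu de soin est la construction explicite de $s_{p}$ et la description de la forme montée-puis-descente de $P_{st}$, car c'est ce qui assure simultanément que $s_{p}$ appartient bien au circuit et qu'il minimise la distance à la racine parmi les sommets du circuit. Tout le reste se ramène à des propriétés élémentaires d'unicité des chemins dans un arbre, déjà utilisées implicitement dans la partie 1).
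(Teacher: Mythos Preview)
Your proof is correct. The paper itself leaves the proof environment empty for this proposition, treating the result as a standard fact from graph theory that requires no argument. Your approach via the ancestor relation in a rooted tree and the lowest common ancestor of $s$ and $t$ is the natural one, and it supplies exactly the details the paper omits: that $\preccurlyeq$ is indeed a partial order, that the unique cycle created by adding $e$ is the tree path $P_{st}$ closed by $e$, and that the lowest common ancestor of $s$ and $t$ lies on $P_{st}$ and is an ancestor of every vertex of the cycle, hence is the unique minimum for $\preccurlyeq$ on $C(e)$.
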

\begin{proof}

\end{proof}
\begin{proposition}[\"Ore]\label{Po}
Soient $\textit{T}$ et $\textit{T'}$ deux arbres couvrants d'un graphe fini, simple, connexe $\Gamma$. Alors on peut passer de $\textit{T}$ à $\textit{T'}$ par une suite finie d'opérations de la forme:\\
On ajoute une arête à $\textit{T}$ pour obtenir un cycle, puis on enlève un autre arête de ce cycle pour obtenir un arbre couvrant. On obtient $\textit{T'}$ après une suite finie de ces opérations. (voir  \cite{O}, théorème 6.4.4).
\end{proposition}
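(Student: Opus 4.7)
The plan is to proceed by induction on the integer
\[
d(\textit{T},\textit{T'}):=|E(\textit{T})\setminus E(\textit{T'})|,
\]
which equals $|E(\textit{T'})\setminus E(\textit{T})|$ since both spanning trees have exactly $|V(\Gamma)|-1$ edges. If $d(\textit{T},\textit{T'})=0$ then $\textit{T}=\textit{T'}$ and there is nothing to do. So assume $d(\textit{T},\textit{T'})\geqslant 1$.

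First I would pick any edge $e'\in E(\textit{T'})\setminus E(\textit{T})$ and adjoin it to $\textit{T}$. Since $\textit{T}$ is a spanning tree, adding $e'$ creates a unique cycle $C$ in $\textit{T}\cup\{e'\}$, exactly as in the previous proposition. The crucial combinatorial step is to produce an edge $e\in E(C)\setminus E(\textit{T'})$: if every edge of $C$ other than $e'$ belonged to $\textit{T'}$, then together with $e'$ itself they would form a cycle entirely contained in $\textit{T'}$, contradicting the fact that $\textit{T'}$ is a tree. Hence such an edge $e$ exists; note that by construction $e\in E(\textit{T})$, and $e\neq e'$.

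Now set $\textit{T''}:=(\textit{T}\cup\{e'\})\setminus\{e\}$. This is obtained from $\textit{T}$ by exactly one of the allowed operations (add $e'$ to form the cycle $C$, then remove the edge $e$ of $C$), and it is again a spanning tree of $\Gamma$ because removing an edge from a cycle of a connected graph preserves both connectedness and the correct edge count. The key numerical observation is that
\[
d(\textit{T''},\textit{T'})=d(\textit{T},\textit{T'})-1,
\]
since we have added an edge $e'\in E(\textit{T'})$ and removed an edge $e\notin E(\textit{T'})$. By the induction hypothesis applied to $\textit{T''}$ and $\textit{T'}$, one can pass from $\textit{T''}$ to $\textit{T'}$ by a finite sequence of such add-an-edge/remove-an-edge-of-the-resulting-cycle operations, and prepending the single operation $\textit{T}\leadsto \textit{T''}$ yields the desired sequence from $\textit{T}$ to $\textit{T'}$.

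The main (and really only) obstacle is the existence of the exchange edge $e\in E(C)\setminus E(\textit{T'})$; this is essentially the matroid exchange axiom for graphic matroids, and the argument above — that $C\subset E(\textit{T'})\cup\{e'\}$ would force $\textit{T'}$ to contain a cycle — handles it cleanly. Everything else is bookkeeping on the symmetric difference, and the reference to \cite{O} covers the same statement if one prefers to cite it directly rather than reproduce the induction.
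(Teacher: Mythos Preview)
Your argument is correct: the induction on $d(\textit{T},\textit{T'})=|E(\textit{T})\setminus E(\textit{T'})|$ together with the exchange step (the edge $e$ exists because otherwise the whole cycle $C$ would lie in $\textit{T'}$) is the standard proof, and the bookkeeping on the symmetric difference is accurate.

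As for comparison with the paper: there is nothing to compare. The paper does not prove this proposition at all; it merely states it and sends the reader to Ore's book (th\'eor\`eme 6.4.4 of \cite{O}). So you have supplied a self-contained proof where the paper simply quotes the literature. Your approach is exactly the classical one (essentially the basis-exchange property of the cycle matroid), and it is what one would find in Ore or in any standard graph-theory text.
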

\subsection{La construction fondamentale}
On considère le graphe  $\Gamma(G)$.

Soient $K'$ un sur-corps de $K_{0}$ et $M'$ un $K'$-espace vectoriel. On va construire des représentations $R \to GL(M')$ qui satisfont aux conditions H(R) suivantes:
\begin{enumerate}
  \item  $\forall s \in S$, $R(s)$ est une réflexion de $M$;
  \item Si $s\in S$ et si $a_{s}$ est un vecteur directeur de $R(s)$, alors $\mathcal{A}:=\{a_{s}|s\in S\}$ est une base de $M$;
  \item  $\forall (s,t)\in S\times S$, $R(s)R(t)$ a le même ordre que $st$.
\end{enumerate}
Pour cela, nous allons faire une série de choix.\\
On choisit un arbre couvrant $\textit{T}$ et $s_{0}$ un sommet (une racine) de $\Gamma(G)$ . On appelle $E(\Gamma(G))$ l'ensemble des arêtes de $\Gamma(G)$ et $E(\textit{T})$ l'ensemble des arêtes de $\textit{T}$, enfin on pose $E'(\textit{T}) :=E(\Gamma(G))-E(\textit{T})$.\\
On définit sur l'ensemble des sommets de $\Gamma(G)$ une relation d'ordre, notée $\preccurlyeq$ comme dans la proposition 6.\\
On définit des éléments $\zeta_{s}$ ($s\in S$) de $GL(M)$ de la manière suivante:
\begin{itemize}
  \item $\forall s \in S, \zeta_{s}(a_{s})=-a_{s}$;
  \item soit $(s,t)\in S^{^2}$ tel que $st=ts\neq 1$, on pose $\zeta_{t}(a_{s})=a_{s}$;
  \item soit $e:=(s,t)\in E(\textit{T})$ avec $s\preccurlyeq t$, on pose $\zeta_{s}(a_{t})=\alpha_{e}a_{s}+a_{t}$ et $\zeta_{t}(a_{s})=a_{s}+a_{t}$;
  \item soit $e:=(s,t)\in E'_{\textit{T}}$, on pose $\zeta_{s}(a_{t})=l_{e}a_{s} +a_{t}$ et $\zeta_{t}(a_{s})=l'_{e}a_{t} +a_{s}$.
\end{itemize}
\begin{notation}
On appelle $K:=K_{0}(l_{e}| e\in E'(\textit{T}))$ le sous-corps de $K'$ engendré par $K_{0}$ et tous les $l_{e}$ ($e\in E'(\textit{T})$) et on pose $M:=M'\otimes_{K_{0}}K$.
\end{notation}
L'application $R :s \mapsto \zeta_{s}: S \to {\zeta_{s} | s \in S}$ se prolonge en une représentation (notée aussi $R$) $R: W \to GL(M)$ qui possède par construction les propriétés H(R). On dit que $R$ est obtenue par la \textbf{construction fondamentale}.

On dit que $\mathcal{A}:=\{a_{s}|s\in S\}$ est une \textbf{base adaptée} à la construction fondamentale, toute autre base s'obtient en multipliant chaque élément de $\mathcal{A}$ par un scalaire non nul, toutes les autres bases adaptées s'obtiennent en multipliant tous les éléments de $\mathcal{A}$ par un même scalaire.
\begin{definition}\label{paramètres}
On pose $\mathcal{P}(G)$ = $\mathcal{P}((\alpha_{e}| e\in E(\textit{T})) \cup (l_{e}| e \in E'_{\textit{T}}))$ et on appelle $\mathcal{P}(G)$ le \textbf{système de paramètres} de $G$.
\end{definition}
On peut remarquer que si $G$ et $G'$ ont le même système de paramètres alors ils sont isomorphes; de plus tous les éléments des matrices des éléments de $Im R$ s'écrivent dans l'anneau $\mathcal{O}(K)$ sous-anneau de $K$ engendré par les éléments du système de paramètres.
\begin{theorem}[Théorème fondamental]
Soit $R$ une représentation de réflexion du groupe de Coxeter $W$. Alors:
\begin{enumerate}
  \item Si l'on change d'arbre couvrant ou bien si l'on change de racines, on obtient une représentation équivalente.
  \item Si l'on change de système de paramètres, on obtient une représentation inéquivalente.
 
\end{enumerate}

\end{theorem}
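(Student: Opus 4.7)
Le plan consiste à traiter les deux parties séparément.

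Pour la partie (1), je commencerais par le cas d'un changement de racine sur un arbre couvrant $T$ fixé. Fixons une arête $e=(s,t)$ de $T$ dont l'orientation $\preccurlyeq$ bascule : on passe de $\zeta_s(a_t)=\alpha_e a_s+a_t$, $\zeta_t(a_s)=a_s+a_t$ à la forme symétrique. Le changement de base $a_t\mapsto\alpha_e^{-1}a_t$ réalise précisément cet échange, et comme $T$ est un arbre on peut composer de tels changements d'échelle le long de l'unique chemin entre l'ancienne et la nouvelle racine sans collision. Pour le passage à un autre arbre couvrant, j'utiliserais la proposition \ref{Po} pour se ramener à un mouvement élémentaire $T' = (T\cup\{e_0\})\setminus\{e_1\}$, où $e_0\in E'(T)$ et $e_1$ est une arête du cycle $C(e_0)$. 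On écrit alors les générateurs $\zeta_s$ pour $T$ et pour $T'$: les actions ne diffèrent que sur les sommets de $C(e_0)$, et on peut déterminer explicitement les nouveaux paramètres $l$ de $e_1$ et $\alpha_{e_0}$ en fonction des anciens $\alpha_{e_1}$, $l_{e_0}$, $l_{e_0'}$ en imposant l'égalité des représentations sur $\mathcal{A}$, à une homothétie diagonale près le long du cycle.

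Pour la partie (2), je procéderais par un argument de rigidité. Soient $R$ et $R'$ deux représentations issues de la construction fondamentale avec le même arbre $T$ et la même racine, mais avec deux systèmes de paramètres $(\alpha_e, l_e)$ et $(\alpha_e', l_e')$. Supposons $\phi\in GL(M)$ telle que $\phi R(s) \phi^{-1} = R'(s)$ pour tout $s\in S$. Puisque $R(s)$ et $R'(s)$ sont des réflexions de même vecteur directeur $a_s$, l'application $\phi$ préserve chaque droite $\langle a_s\rangle$, donc s'écrit $\phi(a_s) = \lambda_s a_s$ dans la base $\mathcal{A}$. En appliquant l'identité d'entrelacement sur une arête $(s,t)\in E(T)$ avec $s\preccurlyeq t$ et en comparant coefficient à coefficient dans $\mathcal{A}$, on obtient $\lambda_s \alpha_e = \lambda_t \alpha_e'$ et $\lambda_s=\lambda_t$; la seconde relation force $\alpha_e=\alpha_e'$, et la connexité de $\Gamma(G)$ propage $\lambda_s=\lambda$ à tous les sommets. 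Un calcul analogue sur une arête hors-arbre donne $l_e=l_e'$. Ainsi $\mathcal{P}(G)=\mathcal{P}(G')$, ce qui établit par contraposée l'inéquivalence des représentations dès que les systèmes de paramètres diffèrent.

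Le principal obstacle sera le mouvement élémentaire d'Ore de la partie (1) : il faut suivre soigneusement comment l'ordre $\preccurlyeq$ est modifié sur les sommets du cycle $C(e_0)$, et donc comment les coefficients de toutes les arêtes de $T$ adjacentes à ces sommets se transforment, pas seulement ceux portés par $C(e_0)$. Je m'attends à ce que la relation clé soit une identité de monodromie exprimant $l_{e_0}l_{e_0'}=\alpha_{e_0}$ comme un produit convenable le long de $C(e_0)$, faisant intervenir les $\alpha_e$ des arêtes de $T$ dans le cycle; cette même identité de monodromie, appliquée à rebours, fournit à la fois la formule déterminant les nouveaux paramètres $l_{e_1}$, $l_{e_1'}$ dans l'arbre $T'$ et la cohérence avec les contraintes de la proposition \ref{P2} sur les coefficients de Cartan.
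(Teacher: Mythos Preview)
Your approach to the partie (2) is correct and genuinely different from the paper's. The paper proceeds by comparing characters: it uses Proposition~\ref{P2} to see that $\operatorname{tr}(R(st))=|S|-4+\alpha_e$ distinguishes the $\alpha_e$, and then computes the trace of a product $s_1s_2\cdots s_m$ around each fundamental cycle to show it is affine in $l_{e'}$, hence distinguishes the off-tree parameters. Your direct intertwiner argument is cleaner: once you observe that any $\phi$ conjugating $R$ to $R'$ must send the $(-1)$-eigenspace $\langle a_s\rangle$ of $R(s)$ to that of $R'(s)$, diagonality of $\phi$ is immediate, and the relations $\lambda_s=\lambda_t$ (from $\zeta_t(a_s)=a_s+a_t$) and $\alpha_e\lambda_s=\alpha_e'\lambda_t$ force equality of parameters. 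The paper's trace argument has the mild advantage of producing explicit numerical invariants, but for the statement as written your rigidity argument is both sufficient and more transparent.

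Your partie (1), however, has a concrete gap in the root-change step. The rescaling $a_t\mapsto\alpha_e^{-1}a_t$ does flip the single edge $e=(s,t)$ correctly, but it simultaneously destroys the normalisation on every \emph{other} tree edge incident to $t$. If $(t,u)\in E(T)$ with $t\preccurlyeq u$ (an orientation that does \emph{not} change), then after your rescaling $\zeta_u(a_t')=a_t'+\alpha_e^{-1}a_u$, which is no longer of the required form $a_t'+a_u$. Composing such one-vertex scalings along the path from the old root to the new one does not repair this: any branch of $T$ leaving the path at an interior vertex gets its coefficients shifted by the accumulated scalar at that vertex, and there is no way to absorb this without also rescaling the branch. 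The correct move (and what the paper does in Proposition~\ref{P8}) is, when moving the root across an edge $(s_0,s_1)$, to multiply \emph{every} $a_s$ in the entire subtree on the $s_0$-side by the common factor $\alpha_{s_0s_1}$. Your plan for the Ore move and the monodromy along the cycle is on the right track and matches the paper's Proposition~\ref{P9}, but the root-change lemma on which it rests needs the subtree-wide rescaling, not a path-wise one.
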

La démonstration va occuper le reste de cette section.
\begin{proposition}\label{P8}
Si dans la construction fondamentale, on change la racine de l'arbre, on obtient une représentation équivalente.
\end{proposition}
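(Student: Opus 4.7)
The plan is to exhibit an explicit diagonal change of basis $\phi \in GL(M)$ that intertwines the representation $R^{s_0}$ built from the root $s_0$ with the representation $R^{s'_0}$ built from another root $s'_0$ (for a suitable, admissible choice of the non-tree parameters).

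\textbf{Reduction to adjacent roots.} Equivalence of representations being transitive, it is enough to treat the case where $s'_0$ is adjacent to $s_0$ in $\textit{T}$; the general case follows by iterating along the unique path in $\textit{T}$ joining the two roots. So I assume $e_0 := (s_0,s'_0) \in E(\textit{T})$.

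\textbf{Structural observation.} Deleting $e_0$ splits $\textit{T}$ into two subtrees. Let $B \subset S$ denote the set of vertices lying in the component containing $s'_0$. Any tree edge $e \neq e_0$ has both of its endpoints in the same component of $\textit{T}\setminus\{e_0\}$; consequently the relative order $\preccurlyeq$ on $e$ is unaffected by the move of root from $s_0$ to $s'_0$. Only the edge $e_0$ itself has its orientation reversed: from $s_0 \preccurlyeq s'_0$ to $s'_0 \preccurlyeq s_0$.

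\textbf{Definition of $\phi$.} Let
\[
\phi(a_s) = \begin{cases} \alpha_{e_0}\, a_s & \text{si } s \in B, \\ a_s & \text{si } s \notin B. \end{cases}
\]
I will then check that $\phi \circ \zeta_s^{s_0} \circ \phi^{-1}$ coincides with the operator $\zeta_s^{s'_0}$ produced by the fundamental construction for the new root (the $\alpha_e$ being kept, and the $l_e$ suitably rescaled).

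\textbf{Vérifications case by case.} The nontrivial cases concern pairs $(s,t)$ joined by an edge of $\Gamma(G)$:
\begin{itemize}
\item For the flipped edge $e_0$ itself, the old formulas $\zeta_{s_0}(a_{s'_0}) = \alpha_{e_0} a_{s_0} + a_{s'_0}$ and $\zeta_{s'_0}(a_{s_0}) = a_{s_0} + a_{s'_0}$ become, after conjugation by $\phi$, the formulas $\zeta_{s_0}(a_{s'_0}) = a_{s_0} + a_{s'_0}$ and $\zeta_{s'_0}(a_{s_0}) = \alpha_{e_0} a_{s'_0} + a_{s_0}$, exactly matching the new construction with $s'_0 \preccurlyeq s_0$.
\item For a tree edge $e \neq e_0$, both endpoints lie on the same side of the cut $(B, S\setminus B)$, so $\phi$ acts by the same scalar on each; as the orientation of $e$ is also unchanged, the formulas are preserved.
\item For a non-tree edge $e = (s,t) \in E'(\textit{T})$ with endpoints on the same side, the parameters $l_e, l_{e'}$ remain unchanged, for the same reason.
\item For a non-tree edge crossing the cut, say $s \in B$ and $t \notin B$, conjugation rescales the parameters: $l_e \mapsto \alpha_{e_0}^{-1} l_e$ and $l_{e'} \mapsto \alpha_{e_0}\, l_{e'}$. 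The product is preserved, $(\alpha_{e_0}^{-1} l_e)(\alpha_{e_0}\, l_{e'}) = l_e l_{e'} = \alpha_{m_e}$, so the new values are again an admissible choice of parameters for the fundamental construction.
\end{itemize}

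\textbf{Main obstacle.} The delicate step is the last bullet: one must check that the rescaling forced on the non-tree edges that straddle $(B, S\setminus B)$ is consistent with the defining constraint $l_e l_{e'} = \alpha_{m_e}$. The factor $\alpha_{e_0}^{\pm 1}$ that appears is precisely inverse on the two sides, so the constraint survives; this is the whole content of the verification. Once this is granted, $\phi R^{s_0}\phi^{-1}$ is a representation obtained by the fundamental construction with root $s'_0$, proving equivalence of $R^{s_0}$ and the new representation. Iterating this one-edge argument along the path from $s_0$ to an arbitrary root yields the statement.
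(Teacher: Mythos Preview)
Your proposal is correct and follows essentially the same approach as the paper: reduce to adjacent roots, split $S$ into the two components of $\textit{T}\setminus\{e_0\}$, and use the diagonal intertwiner that rescales one side by $\alpha_{e_0}$. The only difference is cosmetic (you scale the $s'_0$-side while the paper scales the $s_0$-side, which amounts to the same conjugation up to a global scalar), and you are in fact more explicit than the paper about the non-tree edges crossing the cut and the induced admissible rescaling of the $l_e$'s.
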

\begin{proof}
Comme le graphe $\Gamma(G)$ est connexe, on peut choisir $s_{1}\in S$ tel que $e:=(s_{0},s_{1})\in E(\textit{T})$. On pose pour simplifier la notation $\alpha := \alpha_{m_{e}}$. On pose $a'_{s}:=\lambda_{s}a_{s}$ avec $\lambda_{s}\in K^{*} (s\in S)$ et $\lambda_{1}=1$. On a $a'_{1}=a_{1}$. On appelle $R'$ la représentation de $W$ ainsi obtenue.\\
On définit $\Gamma_{i}\quad (i=1,2)$ par:
\begin{itemize}
  \item $\Gamma_{1}:=\{s |s\in S$\}, la distance de $s$ à $ s_{0}$\ dans $\textit{T}$ est $\leqslant $ à la distance de $ s$  à $ s_{1}$ dans $ \textit{T} $,
  \item $\Gamma_{2}:=S-\Gamma_{1}$.
  
\end{itemize}
D'après la construction fondamentale avec $s_{1}$ comme racine on a:
\begin{align}
s_{0}(a_{1}) & =a_{1}+a'_{0} \notag\\
s_{1}(a'_{0}) &=a'_{0}+\alpha a_{1} \notag
\end{align}
mais $a'_{0}$ = $\lambda_{0}a_{0}$ et $s_{0}(a_{1})$ = $a_{1}+\alpha a_{0}$, donc $\lambda_{0}$ = $\alpha$.
\begin{itemize}
  \item Soit $t\in \Gamma_{1}-\{s_{1}\}$ tel que  $(t,s_{0})\in E(\textit{T})$, alors  $a'_{t}=t(a'_{0})-a'_{0}=\alpha (t(a_{0})-a_{0})=\alpha a_{t}=\lambda_{t}a_{t}$, donc $\lambda_{t}=\alpha$. En parcourant la partie de l'arbre qui est dans $\Gamma_{1}$, on voit que $\forall s \in \Gamma_{1}$, $\lambda_{s}=\alpha$.
  \item Soit $t\in \Gamma_{2}$ tel que  $(t,s_{1})\in E(\textit{T})$. On a $a'_{t}=t(a_{1})-a_{1}=a_{t}$, donc \\$\lambda_{t} =1$. En parcourant la partie de l'arbre qui est dans $\Gamma_{2}$, on voit que $\forall s \in \Gamma_{2}$, $\lambda_{s}=1$.
\end{itemize}
Soit maintenant $g\in GL(M)$ défini de la manière suivante : si $s\in \Gamma_{1}$ on pose $g(a_{s}) :=\alpha a_{s}$, si $s\in \Gamma_{2}$ on pose $g(a_{s})=a_{s}$. Alors $g$ est un opérateur d'entrelacement entre les représentations $R$ et $R'$.\\ En répétant ce raisonnement avec tous les éléments de $S$, on obtient le résultat.
\end{proof}
\subsection{Changement d'arbres}
\begin{proposition} \label{P9}
Si dans la construction fondamentale on change d'arbre couvrant, on obtient une représentation équivalente.
\end{proposition}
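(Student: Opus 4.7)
The plan is to invoke Ore's theorem (Proposition~\ref{Po}) to reduce to the case of a single elementary swap: replace $T$ by $T' = (T\setminus\{e_1\})\cup\{e_0\}$, where $e_0\in E'(T)$ and $e_1$ lies on the unique cycle $C$ produced by adjoining $e_0$ to $T$. Since any two spanning trees are connected by a finite chain of such swaps, and since Proposition~\ref{P8} allows the root to be chosen freely at each step, it is enough to produce an intertwiner between the fundamental representations $R$ and $R'$ associated with $T$ and $T'$. I would first normalise by placing the common root $s_0$ at the entry $s_p$ of $C$ (given by the graph-theoretic proposition that defines this notion). With this choice, the orders $\preccurlyeq$ and $\preccurlyeq'$ agree on every edge of $\Gamma(G)$ lying outside $C$, so $R$ and $R'$ coincide off the cycle and the entire substance of the proof is concentrated on $C$.

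Label the cycle by $s_p=v_q,\,v_{q\pm 1},\ldots$, with $e_0=(v_0,v_r)$ joining the two extremities of the two arcs and with $e_1=(v_k,v_{k+1})$ on the arc from $s_p$ to $v_0$. I would seek an intertwiner of the form $g(a_s)=\mu_s a_s$, with $\mu_s=1$ off $C$ and along the arc from $s_p$ to $v_r$ whose $T$-orientation is preserved in $T'$. Propagating along the rest of $C$ and demanding that the conjugate $g\zeta_s g^{-1}$ satisfy the $T'$-formulas on the new basis $\mathcal{A}'=(g(a_s))$ forces $\mu_{v_0}=l_{e_0}\mu_{v_r}$ across the new tree edge $e_0$ and $\mu_{v_{i+1}}=\alpha_{(v_i,v_{i+1})}\mu_{v_i}$ on each reversed tree edge of the broken arc ($0\leqslant i\leqslant k-1$). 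These identities determine the $\mu_s$ uniquely, and a direct two-dimensional check in each plane $\langle a_{v_i},a_{v_{i+1}}\rangle$ shows that the transported formulas coincide with those prescribed by the fundamental construction on $T'$ for every edge in $E(T)\cap E(T')$ and for $e_0$ regarded as a tree edge of $T'$.

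The remaining point concerns $e_1$, which has become a non-tree edge of $T'$ and therefore carries parameters $(l_{e_1},l'_{e_1})$ that must obey $l_{e_1}l'_{e_1}=\alpha_{e_1}$. Reading off the transported reflections gives $l_{e_1}=\mu_{v_{k+1}}/\mu_{v_k}$ and $l'_{e_1}=\alpha_{e_1}\mu_{v_k}/\mu_{v_{k+1}}$, whose product is manifestly $\alpha_{e_1}$; this may equivalently be viewed as a consequence of the input constraint $l_{e_0}l'_{e_0}=\alpha_{e_0}$ combined with Proposition~\ref{P1}, which forces the Cartan coefficient $C(\zeta_{v_k},\zeta_{v_{k+1}})$ to equal the prescribed root $\alpha_{e_1}$ of $v_{m_{e_1}}$. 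The main obstacle is therefore purely combinatorial bookkeeping: correctly tracing which edges of $C$ have their orientation reversed between $\preccurlyeq$ and $\preccurlyeq'$, and verifying that the product expressions for $\mu_s$ close up consistently around the cycle; with the root placed at the entry $s_p$, this reduces to a short induction along each of the two arcs of $C$.
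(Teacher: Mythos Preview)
Your strategy is the paper's own: invoke Ore (Proposition~\ref{Po}) to reduce to a single edge swap, use Proposition~\ref{P8} to place the root conveniently on the cycle, and exhibit a diagonal intertwiner $g(a_s)=\mu_s a_s$ determined by propagating scalars around $C$. The paper in fact simplifies further than you do: it puts the root at an \emph{endpoint} of the non-tree edge, so that the cycle minus that edge is a single branch of $T$ rather than two arcs; this makes the recursion one-directional and the bookkeeping shorter, but the substance is the same.

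There is, however, a genuine gap in your proposed intertwiner. You set $\mu_s=1$ for every $s\notin C$, arguing that $R$ and $R'$ agree off the cycle. The second claim is true, but it does not give the first: if a branch of $T$ is attached to a cycle vertex $v_j$ with $\mu_{v_j}\neq 1$, then for the off-cycle tree edge $(v_j,u)$ the conjugated formula $gR(u)g^{-1}(a_{v_j})=a_{v_j}+(\mu_u/\mu_{v_j})a_u$ matches the $T'$-formula only when $\mu_u=\mu_{v_j}$, not when $\mu_u=1$. In other words, the scalar must be \emph{constant along each branch hanging off the cycle}, equal to the value at its attaching vertex; this is exactly what the paper does when it sets $\lambda_s=\lambda_j$ for all $s$ in the branch $T_j$ through $s_j$. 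With this correction (and a careful check of the direction of your recursion $\mu_{v_{i+1}}=\alpha_{(v_i,v_{i+1})}\mu_{v_i}$, which depends on whether you are conjugating by $g$ or by $g^{-1}$), your argument goes through and coincides with the paper's.
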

\begin{proof}
D'après la proposition 8 on peut choisir la racine de l'arbre comme l'on veut. On suppose que $C:=\{s_{1},s_{2},\cdots, s_{n}\}$ est un circuit de $\Gamma(G)$ avec $s_{1}$ racine de $\textit{T}$ , les arêtes $(s_{i},s_{i+1})$ $(1\leqslant i \leqslant n-1)$ sont dans $E(\textit{T})$ et $(s_{1},s_{n})\in E'(\textit{T})$. On enlève l'arête $e_{m}:=(s_{m},s_{m+1})$ pour obtenir un autre arbre couvrant $\textit{T'}$, et on obtient ainsi la représentation $R'$ de $W$.\\
Correspondant à $\textit{T}$ on a la base de $M$ : $\emph{A}=(a_{1},a_{2},\cdots,a_{n})\cup \cdots$ et correspondant à $\textit{T'}$ on a la base de $M$ : $\emph{A'}=(a'_{1},a'_{2},\cdots,a'_{n})\cup \cdots$ avec $a'_{s}=\lambda_{s}a_{s}$ $(s\in S)$ et $\lambda_{1}=1$: $a'_{1}=a_{1}$. On a $l_{1n}l_{n1}=\alpha_{1n}\,(=\alpha_{n1})$ et $l'_{m,m+1}l'_{m+1,m}=\alpha_{m,m+1}\,(=\alpha_{m+1,m})$.\\ 
Les branches $T'_{j}$ de $\textit{T'}$ qui passent par $s_{j}$ $(1\leqslant j \leqslant m)$ sont les mêmes que les branches $T_{j}$ de $\textit{T}$ qui passent par $s_{j}$.

	\textbf{ Pour $1\leqslant j \leqslant m$, on a $\lambda_{j}=1$ et $\forall s\in T'_{j}$, $a'_{s}=a_{s}$}.
	\begin{proof}
On procède par récurrence sur $j$. le résultat es vrai par hypothèse pour $j=1$. Soit $j\geqslant 2$ et le résultat supposé vrai pour $j-1$. Alors 
\[
s_{j}(a'_{j-1})=s_{j}(a_{j-1})=a'_{j-1}+a'_{j}=a_{j-1}+\lambda a_{j}=a_{j-1}+a_{j}
\]
donc $\lambda_{j}=1$ et l'on a le résultat. Soit $s \in T'_{j}$ $(1\leqslant j \leqslant m)$, alors par le même raisonnement, on voit que $a'_{s}=a_{s}$.
\end{proof}

\textbf{Pour $m+1 \leqslant j \leqslant n$, on a $\lambda_{n}=l_{n,1}$ et $\lambda_{j}=\alpha_{j,j+1}\alpha_{j+1,j+2}\cdots \alpha_{n-1,n}l_{n,1}$ et $\forall s\in T'_{j}$ $(m+1\leqslant j \leqslant n)$, on a $a'_{s}=\lambda_{j}a_{s}$}.
\begin{proof}
On procède par récurrence descendante sur $j$. On a
\[
s_{n}(a'_{1})=s_{n}(a_{1})=a_{1}+l_{n,1}a_{n}=a'_{1}+a'_{n}=a_{1}+\lambda_{n}a_{n},
\]
donc $\lambda_{n}$ = $l_{n,1}$.\\
Soit $j < n$. On suppose le résultat vrai pour $j+1$: $\lambda_{j+1}=\alpha_{j+1,j+2}\cdots \alpha_{n-1,n}l_{n,1}$, alors
\[
s_{j}(a'_{j+1})=\lambda_{j+1}\alpha_{j,j+1}a_{j}+\lambda_{j+1}a_{j+1}=\lambda_{j}a_{j}+\lambda_{j+1}a_{j+1},
\]
donc $\lambda_{j}=\alpha_{j,j+1}\lambda_{j+1}$, d'où le résultat. Si $s\in T'_{j}$, $(m+1 \leqslant j \leqslant n)$, alors par le même raisonnement, on voit que $a'_{1}=\lambda_{j}a_{1}$.
\end{proof}
\textbf{Fin de la démonstration.}\\
On définit $g\in GL(M)$ de la manière suivante:
\begin{itemize}
  \item pour $1\leqslant j \leqslant m$, $g(a_{j})=a_{j}$ et $\forall s \in T_{j}$, $g(a_{s})=a_{s}$;
  \item pour $m+1\leqslant j \leqslant n$, $g(a_{j})=\lambda_{j}a_{j}$ et $\forall s \in T_{j}$, $g(a_{s})=\lambda_{j}a_{s}$.
  \end{itemize}
  Alors $g$ est un opérateur d'entrelacement entre les représentations $R(T)$ et $R(T')$ de $W$.\\
  Si $T"$ est un arbre couvrant quelconque de $\Gamma(G)$, on utilise la proposition 7.
\end{proof}
\begin{proposition}\label{P10}
Soit $(W,S)$ un système de Coxeter qui satisfait aux hypothèses H(Cox) et soit $R$ et $R'$ deux représentations de réflexion de $W$ obtenues grâce à la construction fondamentale. On pose $G:=ImR$ et $G':=ImR'$. Si les systèmes de paramètres sont distincts, alors $R$ et $R'$ ne sont pas équivalentes.
\end{proposition}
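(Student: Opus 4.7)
Le plan est de démontrer la contraposée: si $R$ et $R'$ sont équivalentes, alors leurs systèmes de paramètres coïncident. Grâce aux propositions \ref{P8} et \ref{P9}, on peut supposer que $R$ et $R'$ sont construites à partir du même arbre couvrant $\textit{T}$ et de la même racine $s_0$, de sorte que la relation d'ordre $\preccurlyeq$ et les formules explicites des $\zeta_s$ sont identiques pour les deux représentations. Je note $\alpha_e, l_e$ les paramètres de $R$ et $\alpha^*_e, l^*_e$ ceux de $R'$.

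La première étape est une observation de rigidité: soit $g \in GL(M)$ un opérateur d'entrelacement, $g R(s) = R'(s) g$ pour tout $s \in S$. Alors $g$ conjugue la réflexion $R(s)$ à la réflexion $R'(s)$, et envoie donc le sous-espace $\mathrm{Im}(R(s)-\mathrm{Id}_M) = K a_s$ sur $\mathrm{Im}(R'(s)-\mathrm{Id}_M) = K a_s$. Il existe ainsi $\mu_s \in K^*$ tel que $g(a_s) = \mu_s a_s$ pour tout $s \in S$; toute l'information portée par $g$ se ramène donc à la famille $(\mu_s)_{s \in S}$.

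L'étape principale consiste à décoder $g R(s)(a_t) = R'(s) g(a_t)$ arête par arête. Pour $e = (s,t) \in E(\textit{T})$ avec $s \preccurlyeq t$, la relation $R(t)(a_s) = a_s + a_t$ donne $\mu_s a_s + \mu_t a_t = \mu_s (a_s + a_t)$, d'où $\mu_t = \mu_s$; la relation $R(s)(a_t) = \alpha_e a_s + a_t$ fournit alors $\alpha_e \mu_s = \alpha^*_e \mu_t$, c'est-à-dire $\alpha_e = \alpha^*_e$. Une récurrence sur la distance à $s_0$ dans l'arbre couvrant $\textit{T}$ montre alors que tous les $\mu_s$ valent une même constante $\mu \in K^*$ et que $\alpha_e = \alpha^*_e$ pour toute arête $e \in E(\textit{T})$. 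Pour une arête non-arborescente $e = (s,t) \in E'(\textit{T})$, la relation $R(s)(a_t) = l_e a_s + a_t$ donne, par le même calcul, $l_e \mu = l^*_e \mu$, donc $l_e = l^*_e$ (et symétriquement pour $l_{e'}$).

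La principale difficulté prévisible est plutôt formelle: il faut être soigneux avec la réduction initiale aux mêmes $\textit{T}$ et $s_0$, car le système de paramètres n'est intrinsèquement défini qu'après fixation de ces données. Une fois cette normalisation effectuée, l'argument se déroule mécaniquement par la combinaison de la rigidité du $-1$-espace propre de chaque $R(s)$, de la connexité de $\Gamma(G)$ assurée par $\textit{T}$, et des formules très explicites pour les $\zeta_s$.
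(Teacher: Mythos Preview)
Your proof is correct and takes a genuinely different route from the paper's. The paper argues via characters: for a tree edge $e=(s,t)$ it invokes Proposition~\ref{P2} to obtain $\mathrm{tr}\,R(st)=|S|-4+\alpha_e$, which separates the $\alpha_e$; for a non-tree edge $e=(s_1,s_m)\in E'(\textit{T})$ it forms the product $u=s_1s_2\cdots s_m$ along the associated circuit and computes that $\mathrm{tr}\,R(u)$ is a polynomial of degree one in $l_{e'}$ (with a case split according to whether the circuit has chords), so distinct $l_{e'}$ give distinct traces. You instead analyse the intertwiner directly: the $(-1)$-eigenspace rigidity forces $g$ to be diagonal in the basis $\mathcal{A}$, and then the explicit formulas for the $\zeta_s$ along the tree force all diagonal entries to agree and all parameters to match. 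Your argument is shorter and avoids the circuit trace computation entirely; the paper's approach has the minor advantage of exhibiting explicit class functions that separate the representations, which is a slightly stronger conclusion than mere inequivalence.
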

\begin{proof}
On peut supposer que l'on a les mêmes arbres couvrants et racines pour $R$ et $R'$. Pour montrer que ces deux représentations ne sont pas équivalentes, on montre que leurs caractères sont distincts.

Soit $e:=(s,t)$ une arête de $\textit{T}$. D'après la proposition 2, on a $tr(st)=|S|-4+C(s,t)$ avec $C(s,t)=\alpha_{e}$. Si $\alpha_{e}\neq \alpha_{e'}$, les traces sont distinctes et on a le résultat dans ce cas.

Soit maintenant $e:=(s_{1},s_{m}) \in E'(\textit{T})$. On a $l_{e}=l_{1,m}$ et $l_{e'}=l_{m,1}$. Si l'on ajoute cette arête à $\textit{T}$, on obtient un circuit $C:=\{s_{1},s_{2},\cdots,s_{m}\}$ avec pour $1\leqslant i\leqslant m-1$, $(s_{i},s_{i+1})\in E(T)$.\\
On pose $u:=s_{1}s_{2}\cdots s_{m}$ et on montre que $tr(u)$ est un polynôme du premier degré en $l_{e'}$ à coefficients dans $K_{0}$ ou $K$, ce qui donne le résultat.\\
Comme on ne cherche que la trace de $u$, on n'écrira en général dans les calculs qui suivent que le coefficient de $a_{s}$ $(s\in S)$ dans $u(a_{s})$. On distingue deux cas suivant que dans $C$, sous-graphe de $\Gamma(G)$, il y a des cordes ou pas.

\textbf{Premier cas}:  Dans $C$ il n'y a pas de cordes: les seules arêtes de $C$ contenant $s_{i}$ sont $(s_{i-1},s_{i})$ et $(s_{i},s_{i+1})$ $(2\leqslant i\leqslant m \,\text{avec} \,s_{m+1}=s_{1})$. On montre que dans ces conditions, $tr(u)$ est un polynôme du premier degré en $l_{e'}$ à coefficients dans $K_{0}$.\\
Nous pouvons déjà remarquer (et cela sera aussi valable dans le deuxième cas) que si $s\in S-C$, alors $u(a_{s})=a_{s}+\cdots$, donc en faisant ceci pour tous les éléments de $S-C$, on voit qu'ils contribuent $|S|-m$ à la trace de $u$.\\
On a:
\[
u(a_{1})=s_{1}s_{2}\cdots s_{m-1}(a_{1}+l_{e'}a_{m})=s_{1}s_{2}(a_{1})+l_{e'}s_{1}s_{2}\cdots s_{m-1}(a_{m})
\]
On a:
\begin{eqnarray*}
s_{1}s_{2}\cdots s_{m-1}(a_{m})&=&s_{1}s_{2}\cdots s_{m-2}(\alpha_{m-1,m}a_{m-1}+a_{m})\\
&=&\alpha_{m-1,m}s_{1}s_{2}\cdots s_{m-2}(a_{m-1})+s_{1}(a_{m})
\end{eqnarray*}
mais $s_{1}(a_{m})=l_{e}a_{1}+a_{m}$, $s_{1}s_{2}(a_{1})=(\alpha_{1,2}-1)a_{1}+\cdots$, puis par une récurrence facile, on trouve:
\[
u(a_{1})=((\alpha_{1,2}-1)+\alpha_{1,m}+\alpha_{1,2}\alpha_{2,3}\cdots\alpha_{m-1,m}l_{m-1,1})a_{1}+\cdots
\]
pour $2\leqslant k \leqslant m-1$, on a: $u(a_{k})=(\alpha_{k,k-1}-1)a_{k}+\cdots$; $u(a_{m})=-a_{m}+\cdots$.\\
Il en résulte que l'on a:
\[
tr(u) =|S|-m+\sum_{k=1}^{m}(\alpha_{k,k+1}-1)+(\prod_{k=1}^{m-1}\alpha_{k,k+1})l_{e'} 
\]
\[
tr(u)=|S|-2m+\sum_{k=1}^{m}\alpha_{k,k+1}+(\prod_{k=1}^{m-1}\alpha_{k,k+1})l_{e'}
\]
(les indices ($\mod{m}$)) et le résultat dans ce cas.\\
\textbf{Deuxième cas}: Dans $C$ il y a des cordes: il existe $p$ et $q$ tels que $1\leqslant p<q \leqslant m$ et $(s_{p},s_{q})\in E'(T)$. Nous choisissons $p$ et $q$ de telle sorte que le circuit 
\[
C':=\{s_{1},\cdots ,s_{p-1},s_{p},s_{q},s_{q+1},\cdots,s_{m}\}
\]
 n'ait pas de cordes avec des variantes si $p=1$ ou si $q=m$. Cela est toujours possible car le graphe $\Gamma(W)$ est fini.\\ Nous faisons le même calcul que dans le premier cas et nous voyons que si
 \[
 v:=s_{1}\cdots s_{p-1}s_{p}s_{q}s_{q+1}\cdots s_{m}
 \]
 alors $tr(v)$ est un polynôme du premier degré en $l_{e'}$ à coefficients dans $K$ et le résultat dans ce cas.
  \end{proof}
  \begin{proof}
Ceci termine la démonstration du théorème fondamental.
\end{proof}
\subsection{La représentation géométrique}
Nous calculons  maintenant les paramètres de la représentation géométrique. 

Soit $(W,S)$ un système de Coxeter satisfaisant aux hypothèses H(Cox). Dans la construction fondamentale, on a le corps $K$, pour chaque arête $e$ de $\Gamma(W)$, $\alpha_{e}$ une racine de $v_{e}(X)$ et enfin pour chaque arête de $E'(\textit{T})$ un élément $l_{e}$ de $K$ et $K=K_{0}(l_{e}\,|\, e\in E'(\textit{T}) )$.

La représentation géométrique est dans la base $\emph{B}:=(b_{s}=\lambda_{s}a_{s}\,|\,s \in S, \lambda_{s}\in K^{*})$ où $K=K_{0}(\cos \frac {\pi}{m_{st}}\, |\, (s,t)\in S^{2})$ et l'on a, si $(s,t)\in S^{2},\, t\neq s$:
\begin{itemize}
  \item $t(b_{s})=b_{s}$ si $st=ts$;
  \item $t(b_{t})=-b_{t}$;
  \item si $m_{st}\geqslant 3$ $t(b_{s})=b_{s}+2\cos \frac {\pi}{m_{st}}b_{t}$ et $s(b_{t})=b_{t}+2\cos \frac {\pi}{m_{st}}b_{s}$.
\end{itemize}

Dans ce qui suit, nous allons déterminer les $\lambda_{s}$, $(s\in S)$, $\alpha_{e}$, $(e\in E(\Gamma(W)))$ et les $l_{e}$, $(e\in E'(T))$.

Dans la construction fondamentale, nous avons un arbre couvrant $\textit{T}$ et une racine $s_{0}$ de cet arbre.\\
	1) Soit $(s,t)\in E(T)$ avec $s\preccurlyeq t$. Alors on a 
\[
t(b_{s})=\lambda_{s}a_{s}+\lambda_{t}2\cos \frac{\pi}{m_{st}}a_{t}=t(\lambda_{s}a_{s})=\lambda_{s}(a_{s}+a_{t})
\]
donc $\lambda_{s} =\lambda_{t}\,2\cos \frac{\pi}{m_{st}}$;
\[
s(b_{t})=\lambda_{t}a_{t}+\lambda_{s}\,2\cos \frac{\pi}{m_{st}}=s(\lambda_{t}a_{t})=\lambda_{t}a_{t}+\lambda_{t}\alpha_{st}a_{s}
\]
donc $\alpha_{st}\lambda_{t}=\lambda_{s}\,2\cos \frac{\pi}{m_{st}}$. Il en résulte que $\alpha_{st}=4\cos^{2}\frac{\pi}{m_{st}}$.\\
	2) Soit $(s,t)\in E'(T)$. On a:
\[
t(b_{s})=\lambda_{s}a_{s}+\lambda_{t}\,2\cos \frac{\pi}{m_{st}}a_{t}=t(\lambda_{s}a_{s})=\lambda_{s}a_{s}+\lambda_{s}l_{ts}a_{t}
\]
donc $\lambda_{t}\,2\cos \frac{\pi}{m_{st}}=\lambda_{t}l_{ts}$. Nous obtenons $\alpha_{st}=l_{st}l_{ts}=4\cos^{2}\frac{\pi}{m_{st}}$.\\
Les $\alpha_{st}$ sont donc entièrement déterminés indépendamment  de $\textit{T}$ et de $s_{0}$.

Il nous reste à déterminer les $\lambda_{s}$, ($s\in S$) et les $l_{e}$, ($e\in E'(T)$).\\
3) Si l'on ajoute l'arête $e\in E'(T)$ à $\textit{T}$ on obtient un circuit 
\[
C:=(s_{1}(=s),s_{2},\cdots,s_{n-1}(=t)).
\]
On appelle $p$ l'entrée dans $C$. On étudie d'abord le cas où $2\leqslant p\leqslant n-1$, puis nous ferons les modifications nécessaires lorsque $p=1$ ou $p=n$.

Pour simplifier les notations  on pose \\$B':=\prod_{k=1}^{n-1}2\cos \frac{\pi}{m_{k,k+1}}$ et $B:=B'2\cos \frac{\pi}{m_{n,n+1}}$ et aussi $\alpha_{k}:=\alpha_{k,k+1}$, les indices étant pris $\pmod{n}$.\\
Soit $q\in \{1,2,\cdots n\}$.\\
- Si $q \geqslant p$ alors $s_{q}\preccurlyeq s_{q+1}$ donc on a, d'après le 1), $\lambda_{q}=2\cos \frac{\pi}{m_{q,q+1}}\lambda_{q+1}$ et on obtient pour $p\leqslant q \leqslant n-1$, $\lambda_{q}=(\prod_{k=q}^{n-1}2\cos \frac{\pi}{m_{q,q+1}})\lambda_{n}$.\\ En particulier $\lambda_{p}=(\prod_{k=p}^{n-1}2\cos \frac{\pi}{m_{q,q+1}})\lambda_{n}$.

-Si $2\leqslant q \leqslant p$ alors $s_{q}\preccurlyeq s_{q-1}$ donc on a, d'après le 1), $\lambda_{q}=(\prod_{k=2}^{q}2\cos \frac{\pi}{m_{k,k+1}})\lambda_{1}$. En particulier $\lambda_{p}=(\prod_{k=2}^{p}2\cos \frac{\pi}{m_{k,k-1}})\lambda_{1}$ = $(\prod_{k=1}^{p-1}2\cos \frac{\pi}{m_{k,k+1}})\lambda_{1}$. On obtient ainsi 
\[
(\prod_{k=p}^{n-1}2\cos \frac{\pi}{m_{k,k+1}})\lambda_{n}=(\prod_{k=1}^{p-1}2\cos \frac{\pi}{m_{k,k+1}})\lambda_{1}.
\]
Nous avons vu au 2) que $\lambda_{n}2\cos \frac{\pi}{m_{n,n+1}}=l_{m,1}\lambda_{1}$ et $\lambda_{1}2\cos \frac{\pi}{m_{n,n+1}}=l_{1,n}\lambda_{n}$. Nous obtenons alors: 
\[
\lambda_{n}B'2\cos\frac{\pi}{m_{n,n+1}}=\lambda_{1}(\prod_{k=1}^{p-1}\alpha_{k})2\cos\frac{\pi}{m_{n,n+1}}=\lambda_{1}(\prod_{k=1}^{p-1}\alpha_{k})l_{1,n}
\]
d'où $(\prod_{k=1}^{p-1}\alpha_{k})l_{1,n}=B$. De la même manière $(\prod_{k=p}^{n-1}\alpha_{k})l_{n,1}=B$.

Ceci nous donne les valeurs de $l_{1,n}$, $l_{n,1}$ et des $\lambda_{p}$ pour tout $p$ en fonction de $\lambda_{1}$.\\
Si $p=1$, alors $\prod_{k=1}^{p-1}\alpha_{k}=1$ par convention et on obtient $l_{1,n}$ = $B$ et $\alpha_{n}$ = $Bl_{n,1}$.\\
Si $p=n$, alors $\prod_{k=p}^{n-1}\alpha_{k}=1$ par convention et on obtient $l_{n,1}$ = $B$ et $\alpha_{n}$ = $Bl_{1,n}$.

Nous procédons de la même manière pour toutes les arêtes $e\in E'(T)$ et nous obtenons les valeurs de tous les paramètres.
\subsection{Formes sesquilinéaires invariantes}
Nous nous intéressants maintenant aux formes sesquilinéaires $G$-invariantes.

Soit $(W,S)$ un système de Coxeter satisfaisant aux hypothèses H(Cox) et soit $R$ une représentation de réflexion de $W$ obtenue grâce à la construction fondamentale (arbre couvrant $\textit{T}$ et racine $s_{0}$). On a posé, comme d'habitude $G=Im R$. Soit $\theta \in Aut \,K$. On cherche les formes $\theta$-sesquilinéaires  $G$-invariantes.\\
Soit $\Phi:=\{\varphi | \varphi : M\times M \to K\}$, $\varphi$ forme $\theta$-sesquilinéaire $G$-invariante ($\varphi $ est linéaire par rapport à la première variable). Il est clair que $\Phi$ est un $K$-espace vectoriel.

Si $s\in S$, il existe un unique chemin dans $\textit{T}$ qui joint $s$ à $s_{0}$ $(s_{0},s_{1},\cdots,s_{p}=s)$. Pour $0\leqslant i \leqslant p-1$, on a une racine $\alpha_{i}$ de $v_{m_{s_{i},s_{i+1}}}(X)$. On pose $\prod_{\textit{T},s_{0}}(s):=\prod_{i=0}^{p-1}\alpha_{i}$. On a alors le théorème:
\begin{theorem}\hfill\label{T3}
\begin{enumerate}
  \item On a $dim \,\Phi \leqslant 1$;
  \item les deux assertions suivantes sont équivalentes:
  \begin{enumerate}
  \item $dim\,\Phi =1$
  \item \begin{enumerate}
  \item $\theta^{2}=id_{K}$;
  \item $K_{0}$ est contenu dans le sous-corps des points fixes de $\theta$;
  \item soit $e:=(s,t)\in E'(T)$; alors $e$ est contenu dans un unique circuit $C(e)=\{s_{1}=s,s_{2}\cdots, s_{q-1},s_{q}=t\}$ avec $(s_{i},s_{i+1})\in E(T)$ $(1\leqslant i \leqslant q-1)$. On a 
  \[
  \theta(l_{st})\prod_{\textit{T},s_{0}}(s)=l_{ts}\prod_{\textit{T},s_{0}}(t).
  \]

\end{enumerate}
 
\end{enumerate}
\end{enumerate} 
 Si ces conditions sont satisfaites, $\varphi$ est une forme $\theta$-hermitienne.
\end{theorem}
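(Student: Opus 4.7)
Ma stratégie est d'exploiter l'équivalence entre $G$-invariance et invariance sous chaque $s \in S$ (puisque $G = \langle R(S)\rangle$) et le fait que $\varphi$ est entièrement déterminée par sa matrice $(\varphi(a_u, a_v))_{u,v \in S}$. En utilisant $s(a_s) = -a_s$ et l'identité $\varphi(s(x), s(y)) = \varphi(x,y)$, je déduirai d'abord, en notant $\beta_{s,v}$ le coefficient de $a_s$ dans $s(a_v)$ pour $v \neq s$, les deux formules
\[
\varphi(a_s, a_v) = -\tfrac{\theta(\beta_{s,v})}{2}\,\varphi(a_s, a_s), \qquad \varphi(a_v, a_s) = -\tfrac{\beta_{s,v}}{2}\,\varphi(a_s, a_s),
\]
et vérifierai que l'invariance restante (pour $u, v \neq s$) en est une conséquence formelle.

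Je traiterai ensuite les arêtes arborales: pour $e = (s, t) \in E(T)$ avec $s \preccurlyeq t$ (donc $\beta_{s,t} = \alpha_e$, $\beta_{t,s} = 1$), la confrontation des deux expressions de $\varphi(a_t, a_s)$ obtenues via $s$ puis via $t$ donnera $\varphi(a_t, a_t) = \alpha_e\,\varphi(a_s, a_s)$, tandis que celle de $\varphi(a_s, a_t)$ forcera $\theta(\alpha_e) = \alpha_e$. Par récurrence le long du chemin de $s_0$ à $s$ dans $T$, j'obtiendrai $\varphi(a_s, a_s) = \prod_{T, s_0}(s)\,\varphi_0$ avec $\varphi_0 := \varphi(a_{s_0}, a_{s_0})$, ce qui prouve l'assertion 1. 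Chaque $\alpha_e$ engendrant sur $\mathbb{Q}$ la composante cyclotomique réelle correspondante et $K_0$ étant leur compositum, la condition $\theta(\alpha_e) = \alpha_e$ pour tout $e$ équivaut à $K_0 \subset K^\theta$, soit (b).

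Les arêtes $e = (s, t) \in E'(T)$ se traiteront de la même manière, mais en intégrant le rapport $\varphi(a_t, a_t)/\varphi(a_s, a_s) = \prod_{T,s_0}(t)/\prod_{T,s_0}(s)$ imposé par l'arbre. Les confrontations fourniront la condition (c) ainsi que la relation symétrique $\theta(l_{e'}) \prod_{T,s_0}(t) = l_e \prod_{T,s_0}(s)$; en appliquant $\theta$ à (c) sous (b) puis en comparant à cette symétrique, j'obtiendrai $\theta^2(l_e) = l_e$ pour tout $e \in E'(T)$. Comme $K = K_0(l_e\,|\,e \in E'(T))$ et que $\theta^2$ fixe déjà $K_0$, cela force $\theta^2 = \mathrm{id}_K$, d'où (a). La nécessité des trois conditions sera ainsi établie.

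Pour la réciproque, sous (a), (b), (c), je choisirai $\varphi_0 \in K^\theta \setminus \{0\}$, définirai $\varphi(a_u, a_v)$ par les formules précédentes et étendrai par sesquilinéarité. La cohérence des valeurs diagonales est immédiate via l'arbre, et celle sur chaque arête de $E'(T)$ est précisément le contenu de (c). Le caractère $\theta$-hermitien $\varphi(a_v, a_u) = \theta(\varphi(a_u, a_v))$ résultera directement de $\theta^2 = \mathrm{id}$ et de $\varphi(a_s, a_s) = \prod_{T,s_0}(s)\,\varphi_0 \in K^\theta$. La difficulté principale me paraît être la vérification combinatoire que les contraintes (c) attachées à des arêtes distinctes de $E'(T)$ restent compatibles lorsque leurs circuits $C(e)$ s'intersectent; l'intérêt de fixer un arbre couvrant $T$ est précisément de localiser chaque contrainte sur une unique arête non arborale, rendant cette analyse locale et transparente.
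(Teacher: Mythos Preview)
Your plan is correct and follows essentially the same route as the paper's proof: both exploit $s$-invariance to obtain the relations $2\varphi(a_s,a_v)=-\theta(\beta_{s,v})\varphi(a_s,a_s)$ and $2\varphi(a_v,a_s)=-\beta_{s,v}\varphi(a_s,a_s)$, propagate along $T$ to get $\varphi(a_s,a_s)=\prod_{T,s_0}(s)\,\varphi_0$ (hence $\dim\Phi\le 1$), and then confront the two expressions on each $e\in E'(T)$ to extract conditions (a)--(c). One small point of care: your deduction of (b) from ``$\theta(\alpha_e)=\alpha_e$ pour tout $e$'' needs the case $e\in E'(T)$ as well, which you have not yet established at that stage; it follows immediately by multiplying (c) with its symmetric relation, exactly as the paper does, so just postpone the conclusion $K_0\subset K^\theta$ until after treating the non-tree edges. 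Also, the ``difficulté combinatoire'' you anticipate does not arise: each $e\in E'(T)$ constrains only the single value $\varphi(a_s,a_t)$ for that edge, and these are independent once the diagonal values are fixed by the tree.
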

\begin{proof}
On va supposer qu'il existe une forme $\theta$-sesquilinéaire non nulle et $G$-invariante $\varphi : M\times M \to K$. Pour tout $(s,t)\in S^{2}$, on pose $\beta_{st}:=\varphi(a_{s},a_{t})$ et $\beta_{t}:=\beta_{tt}$.

On procède en quelques étapes.

\textbf{Etape 1}. Si $s$ et $t$ dans $S$ commutent et sont distincts, on a $\beta_{st}=0$.
\begin{proof}
$\beta_{st}=\varphi(a_{s},a_{t})=\varphi(s(a_{s}),s(a_{t}))=\varphi(-a_{s},a_{t})=-\beta_{st}$. Comme $K$ est de caractéristique $0$, on obtient $\beta_{st}=0$.
\end{proof}
\textbf{Etape 2}. Soit $(s,t)\in E(T)$ avec $s\preccurlyeq t$. Alors si $\beta_{st}\neq 0$ on a : $\beta_{t}=-2\beta_{st}$, $\beta_{ts}=\beta_{st}$, $\alpha_{st}\beta_{s}=-2\beta_{st}$ et $\theta(\alpha_{st})=\alpha_{st}$.
\begin{proof}
On a:
\[
\beta_{st}=\varphi(a_{s},a_{t})=\varphi(s(a_{s}),s(a_{t}))=\varphi(-a_{s},\alpha_{st}a_{s}+a_{t})=-\beta_{st}-\theta(\alpha_{st})\beta_{s},
\]
d'où $\theta(\alpha_{st})\beta_{s}=-2\beta_{st}$. On a aussi:
\[
\beta_{st}=\varphi(a_{s},a_{t})=\varphi(t(a_{s}),t(a_{t}))=\varphi(a_{s}+a_{t},-a_{t})=-\beta_{st}-\beta_{t}
\]
d'où $\beta_{t}=-2\beta_{st}$. On a par un calcul semblable: $\beta_{ts}=-\alpha_{st}\beta_{s}-\beta_{ts}$, $\alpha_{st}\beta_{s}=-2\beta_{ts}$ et aussi $\beta_{ts}=-\beta_{ts}-\beta_{t}$, d'où $\beta_{t}=-2\beta_{ts}$.\\
Nous voyons ainsi que $\beta_{ts}=\beta_{st}$ et si $\beta_{st}\neq 0$, $\theta(\alpha_{st})=\alpha_{st}$. Il en résulte que $\alpha_{st}$ est dans le sous-corps des points fixes de $\theta$.
\end{proof}
\textbf{Etape 3}. Soit $(s_{0},s_{1},\cdots s_{p})$ une branche de $\textit{T}$. Alors il existe $\mu \in K$ tel que:
\[
\beta_{s_{i}}=2\mu \prod_{\textit{T},s_{0}}(s_{i})\quad (0\leqslant i \leqslant p)\quad \text{et} \quad \theta(\beta_{s_{i}})=\beta_{s_{i}};
\]
\[
\beta_{s_{i-1},s_{i}}=-\mu \prod_{\textit{T},s_{0}}(s_{i})\quad (1\leqslant i \leqslant p)\quad \text{et} \quad \theta(\beta_{s_{i-1},s_{i}})=\beta_{s_{i-1},s_{i}}.
\]
\begin{proof}
Nous procédons par récurrence sur $p$. Si $p=1$, nous appliquons le résultat de l'étape 2: $\alpha_{s_{0}s_{1}}\beta_{s_{0}}=-2\beta_{s_{0}s_{1}}$ et nous posons $\beta_{s_{0}s_{1}}=-\mu \alpha_{s_{0}s_{1}}$ avec $\mu \in K$ et nous supposons dans la suite que $\mu \neq 0$ (i. e. que la forme $\varphi$ est non nulle). On a alors $\beta_{s_{0}}=2\mu$, $\beta_{s_{1}}=2\mu \alpha_{s_{0}s_{1}}$. Ce sont les formules pour $i=0$ et $i=1$ pour $\beta_{s_{i}}$.\\
Supposons $i\geqslant 2$ et le résultat vrai pour $i$:
\[
\beta_{s_{i-1}s_{i}}=-\mu\prod_{\textit{T},s_{0}}(s_{i}),\quad \beta_{s_{i}}=2\mu\prod_{\textit{T},s_{0}}(s_{i})
\]

D'après l'étape 2, on a:
\[
\alpha_{s_{i}s_{i+1}}\beta_{s_{i}}=-2\beta_{s_{i}s_{i+1}}=2\mu(\prod_{\textit{T},s_{0}}(s_{i}))\alpha_{s_{i}s_{i+1}}
\]
donc $\beta _{s_{i}s_{i+1}}=-\mu \prod_{\textit{T},s_{0}}(s_{i+1})$ et $\beta_{s_{i+1}}=2\mu\prod_{\textit{T},s_{0}}(s_{i+1})$. Le résultat est donc vrai pour tout $i$.
\end{proof}
\textbf{Etape 4}. La dimension de l'espace vectoriel $\Phi$ est $\leqslant 1$.
\begin{proof}
Soit $t\in S$ tel que $(s_{0},t)\in E(T)$ et $s_{1}\neq t$. D'après l'étape 2, $\alpha_{s_{0}t}\beta_{s_{0}}=-2\beta_{s_{0}t}$ donc $\beta_{s_{0}t}=-\mu\alpha_{s_{0}t}$.\\
Il en résulte d'abord que $\beta_{s_{0}t}$ est non nul, puis que l'on peut appliquer l'étape 3 à toutes les branches de $\textit{T}$ qui contiennent $t$. En faisant de même pour toutes les autres branches de $\textit{T}$ qui partent de $s_{0}$, on a le résultat: $\Phi$ ne dépend que de $\mu$, donc $dim\,\Phi \leqslant 1$.
\end{proof}
\textbf{Etape 5}. La condition (b) est satisfaite.
\begin{proof}
Soient $e:=(s,t)\in E'(T)$ et $C(e)$ l'unique circuit de $\Gamma(W)$ obtenu en ajoutant l'arête $e$ à $\textit{T}$. On a 
\[
\beta_{s}=2\prod_{\textit{T},s_{0}}(s),\quad \beta_{t}=2\prod_{\textit{T},s_{0}}(t),\quad s(a_{t})=l_{st}a_{s}+a_{t},\quad t(a_{s})=a_{s}+l_{ts}a_{t}.
\]
On obtient:
\[
\beta_{st}=\varphi(a_{s},a_{t})=\varphi(s(a_{s}),s(a_{t}))=\varphi(-a_{s},l_{st}a_{s}+a_{t})=-\theta(l_{st})\beta_{s}-\beta_{st}
\]
\[
\beta_{st}=\varphi(t(a_{s}),t(a_{t}))=\varphi(a_{s}+l_{ts}a_{t},-a_{t})=-\beta_{st}-l_{ts}\beta_{t}
\]
d'où: 
\begin{equation}
\beta_{st}=-\theta(l_{st})\prod_{\textit{T},s_{0}}(s)=-l_{ts}\prod_{\textit{T},s_{0}}(t)
\end{equation}
Par un calcul semblable, on obtient:
\begin{equation}
\beta_{ts}=-l_{st}\prod_{\textit{T},s_{0}}(s)=-\theta(l_{ts})\prod_{\textit{T},s_{0}}(t)
\end{equation}
On a $\theta(\prod_{\textit{T},s_{0}}(s))=\prod_{\textit{T},s_{0}}(t)$ et $\theta(\prod_{\textit{T},s_{0}}(t))=\prod_{\textit{T},s_{0}}(s)$ d'après l'étape 3.\\
Appliquons $\theta$ à l'équation (1):
\[
\theta(\beta_{st})=-\theta^{2}(l_{st})\prod_{\textit{T},s_{0}}(s)=-\theta(l_{ts})\prod_{\textit{T},s_{0}}(t)=\beta_{ts}=-l_{st}\prod_{\textit{T},s_{0}}(s).
\]
On en déduit que $\theta(\beta_{st})=\beta_{ts}$ et $\theta^{2}(l_{st})=l_{st}$.\\
De même en appliquant $\theta$ à l'équation (2), on obtient:
 $\theta(\beta_{ts})=\beta_{st}$\\ et $\theta^{2}(l_{ts})=l_{ts}$.\\
 On a, en multipliant l'équation (1) par $l_{st}$:
\[
l_{st}\theta(l_{st})\prod_{\textit{T},s_{0}}(s)=\alpha_{st}\prod_{\textit{T},s_{0}}(t)
\]
 mais $\theta(l_{st}\theta(l_{st}))=\theta(l_{st})\theta^{2}(l_{st})=\theta(l_{st})l_{st}$, donc $\theta(l_{st})l_{st}$ est dans le sous-corps des points fixes de $\theta$. Il en est donc de même de $\alpha_{e}$. Si nous appliquons ceci à toutes les arêtes de $E'(T)$, nous voyons que $K_{0}$ est contenu dans le sous-corps des points fixes de $\theta$ et, comme $K=K_{0}(l_{e}|e\in E'(T))$, on a $\theta^{2}=id_{K}$.
\end{proof}
\textbf{Etape 6}. Fin de la démonstration.\\
Si la condition (b) est satisfaite, alors les formules obtenues précédemment montrent que $\dim \Phi =1$ et que tous les éléments de $\Phi$ sont des formes $\theta$-hermitiennes.
 \end{proof}
 \begin{remark}\label{R1}
Si $s_{p}$ est l'entrée dans le circuit $C(e)$ de l'étape 5, on obtient après simplifications, où $C(e)=(s=s_{1},\cdots,s_{p},\cdots,s_{q}=s_{t})$:
\[
\theta(l_{st})\prod_{k=1}^{p-1}\alpha_{s_{k},s_{k+1}}=l_{ts}\prod_{k=p}^{q-1}\alpha_{s_{k},s_{k+1}};
\]
si $p=1$, $\theta(l_{st})=l_{ts}\prod_{k=1}^{q-1}\alpha_{s_{k},s_{k+1}}$; si $p=q$, $\theta(l_{st})\prod_{k=1}^{q-1}\alpha_{s_{k},s_{k+1}}=l_{ts}$.
\end{remark}
\begin{corollary}\label{C1}
Si $\Gamma(W)$ est un arbre, alors $\theta = id_{K}$ (et $K=K_{0}$), $dim\,\Phi=1$ et tout $\varphi \in \Phi$ est une forme bilinéaire symétrique.
\end{corollary}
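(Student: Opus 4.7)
Mon plan est d'exploiter le fait que, lorsque $\Gamma(W)$ est déjà un arbre, on peut prendre $\textit{T}:=\Gamma(W)$ comme arbre couvrant dans la construction fondamentale. Alors $E'(\textit{T})=E(\Gamma(W))-E(\textit{T})=\emptyset$, et par conséquent $K=K_{0}(l_{e}\,|\,e\in E'(\textit{T}))=K_{0}$. Ceci établit déjà l'égalité $K=K_{0}$ annoncée et permet d'interpréter tout automorphisme $\theta$ de $K$ comme un automorphisme de $K_{0}$.

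L'étape suivante consiste à invoquer le Théorème \ref{T3} pour contrôler $\Phi$. La partie 1 de celui-ci donne déjà $\dim\Phi\leqslant 1$. Pour obtenir l'égalité, il suffit de vérifier la condition (b). Or la clause (iii) ne porte que sur les arêtes de $E'(\textit{T})$, donc elle est ici \emph{vide}, donc vérifiée trivialement. Il reste seulement (i) $\theta^{2}=id_{K}$ et (ii) $K_{0}$ contenu dans le sous-corps des points fixes de $\theta$. Comme $K=K_{0}$, la clause (ii) impose à $\theta$ de fixer $K$ tout entier, d'où $\theta=id_{K}$ (et (i) s'en déduit immédiatement). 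Ainsi, la condition (b) est satisfaite précisément pour $\theta=id_{K}$, et pour ce choix on a $\dim\Phi=1$.

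Pour finir, je tirerai la symétrie de la dernière assertion du Théorème \ref{T3}: pour tout $\varphi\in\Phi$ non nul, $\varphi$ est $\theta$-hermitienne. Comme $\theta=id_{K}$, la sesquilinéarité dégénère en bilinéarité et l'hermiticité en $\varphi(x,y)=\varphi(y,x)$, c'est-à-dire la symétrie.

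L'obstacle principal n'en est pas un: tout repose sur la remarque combinatoire élémentaire $E'(\textit{T})=\emptyset$ et la disparition corrélative des paramètres $l_{e}$. Une fois cette observation faite, le corollaire s'obtient par application directe et sans calcul du Théorème \ref{T3}; la seule vigilance à avoir est de bien constater que la clause (iii) de (b) est vide et ne donne donc aucune contrainte, ce qui explique pourquoi c'est $\theta=id_{K}$ (et non un autre automorphisme d'ordre $2$) qui apparaît dans la conclusion.
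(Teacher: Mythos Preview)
Your proof is correct and follows essentially the same approach as the paper, whose proof reads in full: ``C'est une conséquence immédiate de la démonstration précédente.'' You have simply spelled out that immediate consequence: taking $\textit{T}=\Gamma(W)$ makes $E'(\textit{T})=\emptyset$, whence $K=K_{0}$, clause~(iii) of~(b) is vacuous, clause~(ii) forces $\theta=\mathrm{id}_{K}$, and the $\theta$-hermitian conclusion becomes symmetry.
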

\begin{proof}
C'est une conséquence immédiate de la démonstration précédente.
\end{proof}
\begin{corollary}
Si l'on considère la représentation obtenue par la construction fondamentale, équivalente à la représentation géométrique, alors:
\begin{enumerate}
  \item $dim\, \Phi=1$ et $\theta =id_{k}$;
  \item $\varphi \in \Phi$, $\varphi$ est une forme bilinéaire symétrique.
  \end{enumerate}
\end{corollary}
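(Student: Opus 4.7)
The plan is to verify that the parameters of the geometric representation, as computed in the preceding subsection, satisfy condition (b) of Theorem~\ref{T3} with $\theta = \mathrm{id}_K$, and then invoke Theorem~\ref{T3} directly. Since a $\mathrm{id}_K$-hermitian form is just a symmetric bilinear form, the two assertions of the corollary will follow simultaneously.

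First I would check that $K = K_0$ for the geometric representation. From the computations of subsection 3.3, every $\alpha_e$ equals $4\cos^2(\pi/m_e)$, which lies in $K_0$, and for each $e\in E'(T)$ the values $l_{e}$ and $l_{e'}$ are determined by the two equations of the form $\bigl(\prod_{k=1}^{p-1}\alpha_k\bigr)\,l_{e} = B = \bigl(\prod_{k=p}^{n-1}\alpha_k\bigr)\,l_{e'}$, where $B$ is a product of numbers $2\cos(\pi/m_{k,k+1})\in K_0$. Hence $l_e\in K_0$ for every $e\in E'(T)$, so $K=K_0(l_e\mid e\in E'(T))=K_0\subset\mathbb R$. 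Choosing $\theta=\mathrm{id}_K$, conditions (i) and (ii) of Theorem~\ref{T3}(b) are immediate: $\theta^2=\mathrm{id}_K$ and $K_0$ lies in the fixed-field of $\theta$.

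The only point that requires a small computation is condition (iii). Fix $e=(s,t)\in E'(T)$, let $C(e)=(s_1=s,\dots,s_n=t)$ and let $s_p$ be the entry into $C(e)$. The path in $T$ from $s_0$ to $s$ goes through $s_p$ and then along $s_{p-1},\dots,s_1$, while the path to $t$ goes through $s_p$ and then along $s_{p+1},\dots,s_n$; therefore
\[
\textstyle\prod_{T,s_0}(s)=\prod_{T,s_0}(s_p)\cdot\prod_{k=1}^{p-1}\alpha_{s_k,s_{k+1}},\qquad \prod_{T,s_0}(t)=\prod_{T,s_0}(s_p)\cdot\prod_{k=p}^{n-1}\alpha_{s_k,s_{k+1}}.
\]
After cancelling the common factor $\prod_{T,s_0}(s_p)$, condition (iii) reduces (with $\theta=\mathrm{id}_K$) to
\[
\textstyle l_{st}\prod_{k=1}^{p-1}\alpha_{s_k,s_{k+1}} = l_{ts}\prod_{k=p}^{n-1}\alpha_{s_k,s_{k+1}},
\]
which is exactly the identity $\bigl(\prod_{k=1}^{p-1}\alpha_k\bigr)\,l_{st} = B = \bigl(\prod_{k=p}^{n-1}\alpha_k\bigr)\,l_{ts}$ established in subsection 3.3. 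The boundary cases $p=1$ and $p=n$ are handled by the same formula with the convention that an empty product equals $1$, in agreement with the description in Remark~\ref{R1}.

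Theorem~\ref{T3} therefore applies: $\dim\Phi=1$ and every $\varphi\in\Phi$ is $\theta$-hermitian. Since $\theta=\mathrm{id}_K$, this means $\varphi$ is a symmetric bilinear form, which is assertion (2); assertion (1) is contained in the conclusion $\dim\Phi=1$ together with the choice $\theta=\mathrm{id}_K$. There is no real obstacle: the work was already done when computing the parameters of the geometric representation, and the only thing to verify is that those formulas match the compatibility condition (iii) of the theorem, which is immediate.
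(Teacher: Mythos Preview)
Your proof is correct and is exactly the approach the paper takes: the paper's entire proof is the single sentence ``Il suffit de comparer les formules,'' and you have spelled out which formulas are meant, namely that the parameters of the geometric representation computed in subsection~3.3 satisfy condition~(b) of Theorem~\ref{T3} with $\theta=\mathrm{id}_K$, via the simplification in Remark~\ref{R1}.

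One inessential slip: your claim that $l_e\in K_0$, hence $K=K_0$, is not correct in general. The quantity $B$ is a product of factors $2\cos(\pi/m_e)$, whereas $K_0$ is only generated by their squares $\alpha_e=4\cos^2(\pi/m_e)$. For instance, in the $(4,4,4)$ triangle group (tree $s_1\!-\!s_2\!-\!s_3$, root $s_1$) one finds $K_0=\mathbb Q$ but $l_{13}=2\sqrt 2$, so $K=\mathbb Q(\sqrt 2)\neq K_0$. This does no damage to your argument: conditions (i) and (ii) of Theorem~\ref{T3}(b) hold trivially for $\theta=\mathrm{id}_K$ whatever $K$ is, and your verification of (iii) makes no use of $K=K_0$.
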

\begin{proof}
Il suffit de comparer les formules.
\end{proof}
\begin{remark}
En général on a $dim\, \Phi=0$. On utilise la remarque \ref{R1}: si $\prod_{k=1}^{p-1}\alpha_{s_{k},s_{k+1}}\\\neq\prod_{k=p}^{q-1}\alpha_{s_{k},s_{k+1}}$, on prend $l_{st}=1$ et si $\prod_{k=1}^{p-1}\alpha_{s_{k},s_{k+1}}\neq\prod_{k=p}^{q-1}\alpha_{s_{k},s_{k+1}}$, on prend pour $l_{st}$ un élément tel que $\theta(l_{st})=-l_{st}$. Dans les deux cas on a une impossibilité si $\varphi \neq 0$.
\end{remark}

\textbf{Ceci explique pourquoi nous n'utiliseront pas en général les éléments de $\Phi$.}
\subsection{La matrice de Cartan}
Soit $(W,S)$ un système de Coxeter qui satisfait aux hypothèses H(Cox). On choisit un arbre couvrant de $\Gamma(W)$ ainsi qu'un élément $s_{1}$ de $S$. Pour tout $e$ arête de $\Gamma(W)$, on choisit $\alpha_{e}$ une racine de $v_{m_{e}}(X)$; pour tout $e$ arête de $E'(T)$, on choisit $l_{e}$ et $l_{e'}$ dans $K$ tels que $l_{e}l_{e'}=\alpha_{e}$. Grace à la construction fondamentale, on obtient une représentation de réflexion $R:\, W\to GL(M)$ et l'on pose $G:=Im\,R$.

La \textbf{matrice de Cartan} $Car(G)=(c_{rs})_{(r,s)\in S\times S}$ où, si $s_{j}(a_{i})=a_{i}-c_{ji}a_{j}$:
\begin{enumerate}
  \item $\forall r\in S, c_{rr}=2$;
  \item $\forall (r,s)\in S\times S, \text{si} \,rs=sr\neq 1,c(r,s)=0$;
 \item $\forall e =(r,s)\in E(T), \text{si} \,r\preccurlyeq s \,\text{alors}\, c_{rs}=-\alpha_{e} \,\text{et}\, c_{sr}=-1$;
  \item $\forall e =(r,s)\in E'(T), c_{rs}=-l_{e}\, \text{et}\, c_{sr}=-l_{e'}$.
\end{enumerate}
Par construction si $x \in M$ et si on considère la matrice $\textit{M}(x)$ dont les lignes sont les coefficients de $s_{i}(x)$, on a: 
\[
\textit{M}(x) =I_{n}x-Car(G)x.
\]
On pose: $\Delta(G):=\det Car(G) (=\Delta$ s' il ni y a pas de confusion) et on appelle $\Delta(G)$ le \textbf{discriminant} de la représentation $R$.

On a la propriété suivantes de $Car(G)$
\begin{proposition}
On garde les hypothèses et notations précédentes. Supposons qu'il existe une forme bilinéaire non nulle $G$-invariante $\varphi:M\times M \to K$. alors on a la formule:
\[
(\varphi(a_{i},a_{j}))_{1\leqslant i,j\leqslant n}= diag(1,\gamma_{2},\cdots,\gamma_{n})Car(G)
\]
où l'on a posé $\varphi(a_{i},a_{i})=2\gamma_{i}$ $(1\leqslant i \leqslant n)$.
\begin{proof}
Comme dans la partie 2 du théorème, on pose $\beta_{ij}:=\varphi(a_{i},a_{j})$ $(1\leqslant i,j \leqslant n)$ et $\beta_{i}:=\varphi(a_{i},a_{i})=2\gamma_{i}$ $(1\leqslant i\leqslant n)$. On a  d'après la démonstration du théorème 3:
\begin{itemize}
  \item si $s_{i}s_{j}=s_{j}s_{i}\neq 1$, $\beta_{ij}=0$
  \item si $e:=(s_{i},s_{j})\in E(T)$ et si $s_{i}\preccurlyeq s_{j}$, on a $\beta_{ij}=-\alpha_{e}$ et $\beta_{i}=2\gamma_{i}$
  \item si $e:=(s_{i},s_{j})\in E'(T)$, on a $\beta_{ij}=-l_{e}\gamma_{i}$.
\end{itemize}
Donc la ligne correspondant à $a_{i}$ est la même que la i-ième ligne de $Car(G)$ multipliée par $\gamma_{i}$, d'où le résultat.
\end{proof}
\end{proposition}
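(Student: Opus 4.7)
The plan is to verify the matrix equality entry by entry, re-using the intermediate computations carried out in the proof of Theorem \ref{T3} (étapes 1, 2, 3 and 5). Adopt the notation $\beta_{ij}:=\varphi(a_{i},a_{j})$ and $\beta_{i}:=\beta_{ii}=2\gamma_{i}$. The normalization imposed by writing $\mathrm{diag}(1,\gamma_{2},\dots,\gamma_{n})$ amounts to $\gamma_{1}=1$, i.e.\ to taking $\mu=1$ in étape 3; this choice is legitimate because étape 4 shows $\dim\Phi\leqslant 1$, and since $\varphi$ is assumed non zero, it is uniquely determined up to a scalar. With this choice, $\gamma_{i}=\prod_{\textit{T},s_{0}}(s_{i})$ for every $i$. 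The $(i,j)$-entry of the right-hand side of the claimed formula is $\gamma_{i}\,c_{ij}$, so what must be established is the identity $\beta_{ij}=\gamma_{i}c_{ij}$ in every case.

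I would split the verification according to the four defining cases of $Car(G)$. For $i=j$ there is nothing to prove: $c_{ii}=2$ and $\gamma_{i}c_{ii}=2\gamma_{i}=\beta_{i}$ by definition. If $s_{i}$ and $s_{j}$ commute with $i\neq j$, then $c_{ij}=0$, and étape 1 gives $\beta_{ij}=0$ at once. For $e=(s_{i},s_{j})\in E(\textit{T})$ with $s_{i}\preccurlyeq s_{j}$, the formulas of étape 2 give $\beta_{j}=-2\beta_{ij}$, i.e.\ $\beta_{ij}=-\gamma_{j}=-\alpha_{e}\gamma_{i}=\gamma_{i}c_{ij}$, using the relation $\gamma_{j}=\alpha_{e}\gamma_{i}$ which also comes out of étape 2 (or equivalently from the product formula of étape 3). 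Symmetry of $\varphi$ (which holds because $\theta=\mathrm{id}_{K}$ in the bilinear case, by Corollaire \ref{C1}) then gives $\beta_{ji}=\beta_{ij}=-\gamma_{j}=\gamma_{j}\cdot(-1)=\gamma_{j}c_{ji}$.

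For $e=(s_{i},s_{j})\in E'(\textit{T})$, I would apply the key equation of étape 5, which with $\theta=\mathrm{id}_{K}$ specialises to $\beta_{st}=-l_{st}\prod_{\textit{T},s_{0}}(s)$. This yields $\beta_{ij}=-l_{e}\gamma_{i}=\gamma_{i}c_{ij}$, and by the companion identity in étape 5 one similarly gets $\beta_{ji}=-l_{e'}\gamma_{j}=\gamma_{j}c_{ji}$. Assembling these four cases completes the equality of matrices.

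I do not expect a genuine obstacle: the whole content of the proposition is already hidden in the proof of Theorem \ref{T3}, and the only thing to be careful about is the bookkeeping — distinguishing the orientation $s_{i}\preccurlyeq s_{j}$ from $s_{j}\preccurlyeq s_{i}$ when reading off $c_{ij}$ versus $c_{ji}$, and remembering that the row index in $Car(G)$ is the reflection that acts while the column index labels the vector acted on. Once this is kept straight, matching signs and the factor $\gamma_{i}$ on row $i$ is automatic.
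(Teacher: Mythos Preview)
Your proposal is correct and follows the same route as the paper: both verify the identity $\beta_{ij}=\gamma_{i}c_{ij}$ case by case, drawing the needed relations directly from the étapes of the proof of Theorem~\ref{T3}. Your write-up is in fact more careful than the paper's (you treat the diagonal and both orientations of tree-edges explicitly); the only quibble is that the symmetry of $\varphi$ should be cited from the final line of Theorem~\ref{T3} (``$\varphi$ est $\theta$-hermitienne'' with $\theta=\mathrm{id}$) rather than from Corollaire~\ref{C1}, which is stated only for trees.
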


\subsection{La représentation duale ou contragrédiente}
Nous étudions maintenant la représentation duale $R^{*}$ de la représentation $R$.

Soit $\mathcal{A}^{*}$ = $(a_{s}^{*}|s \in S)$ la base duale de la base $\mathcal{A}$ de $M$.\\ On a $a_{s}^{*}(a_{t})=\delta_{st}\,(\forall(s,t)\in S^{2})$.\\
Pour $1\leqslant i,\,j\leqslant n$ si $S=\{s_{1},\cdots s_{n}\}$, on peut écrire $s_{i}(a_{j})=a_{j}-\lambda_{ij}a_{i}$ avec $\lambda_{ii}=2$ et si $s_{i}$ et $s_{j}$ commutent et sont distincts, $\lambda_{ij}=\lambda_{ji}=0$. Dans la base $\mathcal{A}^{*}$ on a $s_{i}(a_{j}^{*})=a_{j}^{*}$ si $i\neq j$ et $s_{i}(a_{i}^{*})=\sum_{k=1}^{n}\lambda_{ik}a_{k}^{*}$ $(1\leqslant i \leqslant n)$. On obtient alors, en posant $A'_{i}:=2a_{i}^{*}-\sum_{k=1,k\neq i}^{n}\lambda_{ik}a_{k}^{*}$ $(1\leqslant i \leqslant n)$, $\left[M^{*},s_{i}\right]=<A'_{i}>$, ce qui montre que les $s_{i}$ opèrent comme des réflexions sur $M^{*}$. Le déterminant du système de vecteurs $(A'_{i})\,(1\leqslant i \leqslant n)$ de $M^{*}$ est $\Delta(G)$. On voit donc que $(A'_{1},\cdots,A'_{n})$ est une base de $M^{*}$ si et seulement si $\Delta(G)\neq 0$, c'est à dire si et seulement si $R$ est irréductible. Nous montrons que dans ce cas la représentation $R^{*}$ est une représentation de réflexion de $W$ en trouvant une base adaptée.\\
Soient $\textit{T}$ un arbre couvrant de $\Gamma(W)$ et $s_{1}$ la racine de cet arbre. On pose $A_{1}:=A'_{1}$. On applique maintenant la construction fondamentale à la base $\textit{A'}:=(A'_{i}\,|\, 1\leqslant i \leqslant n)$ de $M^{*}$.\\
Soient $t\in S$ et $C:=(s_{1},s_{2},\cdots,s_{m}=t)$ l'unique chemin de $\textit{T}$ reliant $s_{1}$ à $t$. On pose $A_{i}:=s_{i}.A_{i-1}-A_{i-1}$ pour $2 \leqslant i \leqslant m$. Chaque $A_{i}$ est un multiple de $A'_{i}$ et on cherche le coefficient de proportionnalité.

Pour tout $i,j\, (i\neq j)$, on a $s_{i}(a_{j}^{*})=a_{j}^{*}$. On montre, par récurrence sur $j$, que $A_{j}=(\prod_{\textit{T},s_{1}}(s_{j}))A'_{j}$. Pour $j=1$, on a bien $A_{1}=A'_{1}$ par définition. Supposons le résultat vrai pour $j$. Alors $A_{j+1}=s_{j+1}.A_{j}-A_{j}=(\prod_{\textit{T},s_{1}}(s_{j}))(s_{j+1}.A'_{j}-A'_{j})$.\\
On a
\begin{multline*}
s_{j+1}.A'_{j}-A'_{j}=(-2a_{j}^{*}+\sum_{k=1,k\neq j,j+1}^{n}\lambda_{jk}a_{k}^{*}+\lambda_{j,j+1}s_{j+1}.a_{j+1}^{*})\\
+(2a_{j}^{*}-\sum_{k=1,k\neq j,j+1}^{n}\lambda_{jk}a_{k}^{*}-\lambda_{j,j+1}a_{j,j+1}^{*})
\end{multline*}
donc $s_{j+1}.A'_{j}-A'_{j}=\lambda_{j,j+1}(s_{j+1}.a_{j+1}^{*}-a_{j+1}^{*})=\lambda_{j,j+1}A'_{j+1}$ et nous avons le résultat: 
$A_{j+1}=(\prod_{\textit{T},s_{1}}(s_{j+1}))A'_{j+1}$ car $\lambda_{j,j+1}=\alpha_{j}$.\\
Soit maintenant $(s,t)\in E'(T)$. On a:
\[
A'_{s}=-2a_{s}^{*}+\sum_{u\in S,u\neq s}\lambda_{u}a_{u}^{*}, \quad A'_{t}=-2a_{t}^{*}+\sum_{v\in S,v\neq t}\mu_{v}a_{v}^{*}
\]
et, d'après le résultat précédent
\[
A_{s}=(\prod_{\textit{T},s_{1}}(s))A'_{s}, \quad A_{t}=(\prod_{\textit{T},s_{1}}(t))A'_{t}.
\]
De plus
\[
t.A'_{s}=-2a_{s}^{*}+\sum_{u\in S,u\neq s,t}\lambda_{u}a_{u}^{*}+\lambda_{t}s_{t}.a_{t}^{*}
=A'_{s}+\lambda_{t}(s_{t}.a_{t}^{*}-a_{t}^{*})=A'_{s}+l_{ts}A'_{t}
\]
donc 
\[
t.A_{s}=A_{s}+\frac {(\prod_{\textit{T},s_{1}}(s))l_{ts}}{(\prod_{\textit{T},s_{1}}(t))}A_{t}, \quad s.A_{t}=A_{t}+\frac {(\prod_{\textit{T},s_{1}}(t))l_{st}}{(\prod_{\textit{T},s_{1}}(s))}A_{s}.
\]
\section{Effet du changement de racines pour les représentations de certains groupes de Coxeter finis.}
	\subsubsection {Nous montrons comment changer la racine de l'arbre $\textit{T}$ permet de distinguer les systèmes de racines de type $B_{l}$ et $C_{l}$.}
	Nous supposons que $l=3$ car la démonstration est la même si $l\geqslant 4$ (mais plus longue à écrire!).

Nous avons le système de Coxeter (W,S) avec $S=\{s_{1},s_{2},s_{3}\}$ et le diagramme:
\[
\begin{picture}(150,88)
\put(29,42){\circle{7}}
\put(32,42){\line(1,0){30}}
\put(65,42){\circle{7}}
\put(68,42){\line(4,0){31}}
\put(103,42){\circle{7}}
\put(24,52){$s_{1}$}
\put(61,52){$s_{2}$}
\put(99,52){$s_{3}$}
\put(44,45){$3$}
\put(80,45){$4$}
\end{picture}
\]
qui est un arbre.\\
Nous choisissons $s_{2}$ comme racine. Nous obtenons alors la matrice de Cartan:
\[
Car(W)=
\begin{pmatrix}
2 & -1 & 0\\
-1 & 2 & -2\\
0 & -1 & 2
\end{pmatrix}
\]
et si $\varphi \in \Phi$, $\varphi$ normalisée de telle sorte que $\varphi(a_{1},a_{2})=-1$, on a
\[
(\varphi(a_{i},a_{j}))_{1\leqslant i,j\leqslant3}=
\begin{pmatrix}
2 & -1 & 0\\
-1 & 2 & -2\\
0 & -2 & 4
\end{pmatrix}
\]
Donc le diagramme de Dynkin est:
\[
\begin{picture}(150,68)
\put(29,32){\circle{7}}
\put(32,32){\line(1,0){30}}
\put(65,32){\circle*{7}}
\put(68,33){\line(1,0){31}}
\put(68,31){\line(1,0){31}}
\put(80,29){ <}
\put(103,32){\circle{7}}
\put(24,42){$s_{1}$}
\put(61,42){$s_{2}$}
\put(99,42){$s_{3}$}
\end{picture}
\]
et nous obtenons un système de racines de type $C_{3}$.\\
	2) Nous choisissons $s_{3}$ comme racine. Nous obtenons alors la matrice de Cartan:
\[
Car(W)=
\begin{pmatrix}
2 & -1 & 0\\
-1 & 2 & -1\\
0 & -2 & 2
\end{pmatrix}
\]
et si $\varphi' \in \Phi$, $\varphi'$ normalisée de telle sorte que $\varphi'(a_{1},a_{2})=-1$, on a
\[
(\varphi'(a_{i},a_{j}))_{1\leqslant i,j\leqslant3}=
\begin{pmatrix}
2 & -1 & 0\\
-1 & 2 & -1\\
0 & -1 & 1
\end{pmatrix}
\]
Donc le diagramme de Dynkin est:
\[
\begin{picture}(150,68)
\put(29,32){\circle{7}}
\put(32,32){\line(1,0){30}}
\put(65,32){\circle{7}}
\put(68,33){\line(1,0){31}}
\put(68,31){\line(1,0){31}}
\put(80,29){ >}
\put(103,32){\circle*{7}}
\put(24,42){$s_{1}$}
\put(61,42){$s_{2}$}
\put(99,42){$s_{3}$}
\end{picture}
\]
et nous obtenons un système de racines de type $B_{3}$. \\
Soient $R$ la représentation de $W$ lorsque nous prenons $s_{2}$ comme racine et $R'$ la représentation de $W$ lorsque nous prenons $s_{3}$ comme racine. Avec les notations de la proposition 8, nous avons: $\Gamma_{1}=\{s_{1},s_{2}\}$ et $\Gamma_{2}=\{s_{3}\}$, $\alpha=\alpha_{m_{s_{2}s_{3}}}=2$, donc $g\in GL(M)$ défini par $g(a_{1})=2a_{1}$, $g(a_{2})=2a_{2}$, $ g(a_{3})=a_{3}$ est un opérateur d'entrelacement entre $R$ et $R'$. Mais $g$ est à coefficients entiers et $g^{-1}$ ne l'est pas; les représentations $R$ et $R'$ sont $\mathbb{Q}$-équivalentes mais ne sont pas $\mathbb{Z}$-équivalentes. Les formes bilinéaires symétriques $\varphi$ et $\varphi'$ ne sont pas $\mathbb{Z}$-équivalentes.

	\subsubsection{ Le groupe de Coxeter $BC_{3}$.} 
Nous faisons le même travail avec le groupe de Coxeter $BC_{3}$. Son graphe est:
\[
\begin{picture}(150,68)
\put(29,32){\circle{7}}
\put(32,32){\line(1,0){30}}
\put(65,32){\circle{7}}
\put(68,32){\line(4,0){31}}
\put(103,32){\circle{7}}
\put(24,42){$s_{1}$}
\put(61,42){$s_{2}$}
\put(99,42){$s_{3}$}
\put(44,35){$4$}
\put(80,35){$4$}
\end{picture}
\]
	1) Si nous choisissons $s_{1}$ comme racine, on obtient la représentation $R_{1}$ et nous avons
\[
Car(G)=
\begin{pmatrix}
2 & -2 & 0\\
-1 & 2 & -2\\
0 & -1 & 2
\end{pmatrix},
\qquad \Delta(G)=0
\]
et $\Phi$ est de dimension 1 engendré par $\varphi$ dont la matrice dans la base $\emph{A}=(a_{1},a_{2},a_{3})$ est:
\[
(\varphi(a_{i},a_{j}))_{1\leqslant i,j\leqslant3}=
\begin{pmatrix}
1 & -1 & 0\\
-1 & 2 & -2\\
0 & -2 & 4
\end{pmatrix}
\]
on a donc le diagramme de Dynkin:
\[
\begin{picture}(150,68)
\put(28,32){\circle*{7}}
\put(32,32){\line(1,0){30}}
\put(32,35){\line(1,0){30}}
\put(65,32){\circle{7}}
\put(68,35){\line(1,0){32}}
\put(68,32){\line(1,0){31}}
\put(75,31){ >}
\put(103,32){\circle{7}}
\put(24,42){$s_{1}$}
\put(61,42){$s_{2}$}
\put(99,42){$s_{3}$}
\put(43,31){>}
\end{picture}
\]
	2) Si nous choisissons $s_{2}$ comme racine, on obtient la représentation $R_{2}$ et nous avons
\[
Car(G)=
\begin{pmatrix}
2 & -1 & 0\\
-2 & 2 & -2\\
0 & -1 & 2
\end{pmatrix},
\qquad \Delta(G)=0
\]
et $\Phi$ est de dimension 1 engendré par $\varphi$ dont la matrice dans la base $\emph{A}=(a_{1},a_{2},a_{3})$ est:
\[
(\varphi(a_{i},a_{j}))_{1\leqslant i,j\leqslant3}=
\begin{pmatrix}
2 & -1 & 0\\
-1 & 1 & -1\\
0 & -1 & 2
\end{pmatrix}
\]
on a donc le diagramme de Dynkin:
\[
\begin{picture}(150,68)
\put(28,32){\circle{7}}
\put(32,32){\line(1,0){30}}
\put(32,35){\line(1,0){30}}
\put(65,32){\circle*{7}}
\put(70,35){\line(1,0){30}}
\put(70,32){\line(1,0){30}}
\put(75,31){ <}
\put(103,32){\circle{7}}
\put(24,42){$s_{1}$}
\put(61,42){$s_{2}$}
\put(99,42){$s_{3}$}
\put(43,31){>}
\end{picture}
\]
L'opérateur d'entrelacement $g$ entre ces deux représentations est donné par:\\$g(a_{1})=2a_{1}$, $g(a_{2})=2a_{2}$, $g(a_{3})=a_{3}$. On a $K=K_{0}=\mathbb{Q}$, $R_{1}$ et $R_{2}$ sont $\mathbb{Q}$-équivalentes mais ne sont pas $\mathbb{Z}$-équivalentes.\\
	3) On ne peut pas obtenir par ce procédé la représentation $R_{3}$ qui donne le diagramme de Dynkin suivant:
\[
\begin{picture}(150,68)
\put(28,32){\circle{7}}
\put(32,32){\line(1,0){30}}
\put(32,35){\line(1,0){30}}
\put(65,32){\circle{7}}
\put(70,35){\line(1,0){30}}
\put(70,32){\line(1,0){30}}
\put(75,31){ >}
\put(103,32){\circle{7}}
\put(24,42){$s_{1}$}
\put(61,42){$s_{2}$}
\put(99,42){$s_{3}$}
\put(43,31){<}
\end{picture}
\]
\subsubsection{Le groupe de Coxeter $W(H_{3})$.}
Nous faisons le même travail avec le groupe de Coxeter $W(H_{3})$. Son graphe est:
\[
\begin{picture}(150,68)
\put(28,32){\circle{7}}
\put(32,32){\line(1,0){30}}
\put(65,32){\circle{7}}
\put(70,32){\line(4,0){30}}
\put(103,32){\circle{7}}
\put(24,42){$s_{1}$}
\put(61,42){$s_{2}$}
\put(99,42){$s_{3}$}
\put(44,35){$3$}
\put(82,35){$5$}
\end{picture}
\]
Le polynôme $u_{5}(X)=v_{5}(X)=X^{2}-3X+1$ a comme racines: $\frac{3+\sqrt{5}}{2},\frac{3-\sqrt{5}}{2}$.\\
	1) Nous choisissons $s_{2}$ comme racine et $\alpha$ une racine de $u_{5}(X)$. Nous obtenons la matrice de Cartan:
\[
Car(G)=
\begin{pmatrix}
2 & -1 & 0\\
-1 & 2 & -\alpha\\
0 & -1 & 2
\end{pmatrix},
\qquad \Delta(G)=6-2\alpha
\]
et $\Phi$ est de dimension 1 engendré par $\varphi$ dont la matrice dans la base $\emph{A}=(a_{1},a_{2},a_{3})$ est:
\[
(\varphi(a_{i},a_{j}))_{1\leqslant i,j\leqslant3}=
\begin{pmatrix}
2 & -1 & 0\\
-1 & 2 & -\alpha\\
0 & -\alpha & 2\alpha
\end{pmatrix}
\]
Si on choisit $\alpha=\frac{3+\sqrt{5}}{2}$, alors on obtient le diagramme de Dynkin:
\[
\begin{picture}(150,68)
\put(28,32){\circle{7}}
\put(32,32){\line(1,0){30}}
\put(65,32){\circle*{7}}
\put(70,32){\line(1,0){30}}
\put(75,29.8){ >}
\put(103,32){\circle{7}}
\put(24,42){$s_{1}$}
\put(61,42){$s_{2}$}
\put(99,42){$s_{3}$}
\put(42,38){$3$}
\put(80,38){5}
\end{picture}
\]
Si on choisit $\alpha=\frac{3-\sqrt{5}}{2}$, alors on obtient le diagramme de Dynkin:
\[
\begin{picture}(150,68)
\put(28,32){\circle{7}}
\put(32,32){\line(1,0){30}}
\put(65,32){\circle*{7}}
\put(70,32){\line(1,0){30}}
\put(75,29.8){ <}
\put(103,32){\circle{7}}
\put(24,42){$s_{1}$}
\put(61,42){$s_{2}$}
\put(99,42){$s_{3}$}
\put(42,38){$3$}
\put(80,38){5}
\end{picture}
\]
	2) Nous choisissons $s_{3}$ comme racine et $\alpha$ une racine de $u_{5}(X)$. Nous obtenons la matrice de Cartan:
\[
Car(G)=
\begin{pmatrix}
2 & -1 & 0\\
-1 & 2 & -1\\
0 & -\alpha & 2
\end{pmatrix},
\qquad \Delta(G)=6-2\alpha
\]
et $\Phi$ est de dimension 1 engendré par $\varphi$ dont la matrice dans la base $\emph{A}=(a_{1},a_{2},a_{3})$ est:
\[
(\varphi(a_{i},a_{j}))_{1\leqslant i,j\leqslant3}=
\begin{pmatrix}
2 & -1 & 0\\
-1 & 2 & -1\\
0 & -1 & \alpha +1
\end{pmatrix}
\]
Si on choisit $\alpha=\frac{3+\sqrt{5}}{2}$, alors on obtient le diagramme de Dynkin:
\[
\begin{picture}(150,68)
\put(28,32){\circle{7}}
\put(32,32){\line(1,0){30}}
\put(65,32){\circle{7}}
\put(70,32){\line(1,0){30}}
\put(75,29.8){ >}
\put(103,32){\circle*{7}}
\put(24,42){$s_{1}$}
\put(61,42){$s_{2}$}
\put(99,42){$s_{3}$}
\put(42,38){$3$}
\put(80,38){5}
\end{picture}
\]
Si on choisit $\alpha=\frac{3-\sqrt{5}}{2}$, alors on obtient le diagramme de Dynkin:
\[
\begin{picture}(150,68)
\put(28,32){\circle{7}}
\put(32,32){\line(1,0){30}}
\put(65,32){\circle{7}}
\put(70,32){\line(1,0){30}}
\put(75,29.8){ <}
\put(103,32){\circle*{7}}
\put(24,42){$s_{1}$}
\put(61,42){$s_{2}$}
\put(99,42){$s_{3}$}
\put(42,38){$3$}
\put(80,38){5}
\end{picture}
\]Nous voyons ainsi que changer de racine revient à changer $\alpha$ en $3-\alpha$, c'est à dire que l'on obtient la représentation conjuguée de la représentation $R$.

\begin{center}
Université de Picardie Jules Verne\\
 Pôle Scientifique\\
Laboratoire LAMFA, UMR CNRS 7352\\
33, rue Saint Leu\\
80039 Amiens Cedex\\
francois.zara@u-picardie.fr
\end{center}

\end{document}